\newcommand{\R}{\mathbb{R}}
\newcommand{\N}{\mathbb{N}}
\newcommand{\eps}{\epsilon}
\newcommand{\se}[1]{\mathrm{\, E}{#1}}
\newcommand{\thewebsite}{\mbox{\url{gitlab.com/s_ament/qastable}}}
\newcommand{\vct}[1]{\bm{\mathsf{#1}}}
\newcommand{\mtx}[1]{\bm{\mathsf{#1}}}
\newcommand{\E}{\operatorname{E}}
\renewcommand{\Re}{\operatorname{Re}}
\renewcommand{\Im}{\operatorname{Im}}
\newtheorem{lemma}{Lemma}
\theoremstyle{definition}
\theoremstyle{remark}
\newtheorem{remark}{Remark}
\numberwithin{equation}{section}
\begin{document}

\title{Accurate and efficient numerical calculation of stable densities
  via optimized quadrature and asymptotics}
  
\author{Sebastian Ament\footnote{\sf Department of Computer Science,
	Cornell University, New York,
    NY. E-mail: {\tt ament@cs.cornell.edu}.} \, and
  Michael O'Neil\footnote{\sf Department of Mathematics, Courant
    Institute and Tandon School of Engineering, New York
    University, New York, NY. E-mail: {\tt
      oneil@cims.nyu.edu}.  Research partially supported by AIG-NYU
    Award \#A15-0098-001. } }

\date{\today}

\maketitle

\begin{abstract}
  Stable distributions are an important class of infinitely-divisible
  probability distributions, of which two special cases are the Cauchy
  distribution and the normal distribution.  Aside from a few special
  cases, the density function for stable distributions has no known
  analytic form, and is expressible only through the variate's
  characteristic function or other integral forms. 
  In this paper we present numerical schemes for evaluating
  the density function for stable distributions, its gradient,
  and distribution function in various parameter
  regimes of interest, some of which had no pre-existing efficient
  method for their computation. The novel evaluation schemes consist
  of optimized generalized Gaussian quadrature rules for integral
  representations of the density function, complemented by 
  asymptotic
  expansions near various values of the shape and argument
  parameters. We report
  several numerical examples illustrating the efficiency of our
  methods.
  The resulting code has been made available online.\\
  \newline {\bf Keywords}: Stable distributions, $\alpha$-stable,
  generalized Gaussian quadrature, infinitely-divisible distributions,
  numerical quadrature.
\end{abstract}

\onehalfspacing

\section{Introduction}
\label{sec_intro}

Continuous random variables that follow {\em
  stable laws} arise frequently in physics~\cite{kolbig_1984}, finance
and economics~\cite{mittnik_1993, nolan_2003}, electrical
engineering~\cite{nikias_1995}, and many other fields of the 
natural and social sciences. Certain sub-classes of 
these distributions are also referred to as
$\alpha$-stable, $a$-stable, stable Paretian distributions, or L\'evy
alpha-stable distributions. Going forward, we will merely refer to
them as stable distributions.  The defining characteristic of random
variables that follow stable laws is that the sum of two
independent copies follows the {\em same} scaled and translated
distribution~\cite{nolan_2015}.  For example, if $X_1$ and $X_2$ are
independent, identically distributed (iid) stable random variables,
then {\em in distribution}
\begin{equation}\label{eq_sum}
X_1 + X_2 \sim aX + b,
\end{equation}
where $X$ has the same distribution as each $X_\ell$. Several discrete
random variables, such as those following Poisson distributions, also
obey this {\em stability-of-sums} law, but we restrict our attention to 
continuous distributions. 
Modeling with stable distribution has several advantages. For example,
even though in general they do not have finite variances, 
they are closed under sums
and satisfy a generalized-type of Central Limit
Theorem~\cite{nikias_1995,zolotarev_1986}. This property
is directly related to the fact that they have
tails which are heavier than those of normal random variables.
For this reason, these distributions are  useful in describing many 
real-world data sets from finance, physics, and chemistry.
On the other hand, computing with stable distributions requires more
sophistication than does, for example, computing with normal
distributions. When modeling with multivariate normal distributions,
all of the relevant calculations (in, for example, likelihood
evaluation) are linear-algebraic in nature:
matrix inversion, determinant calculation, eigenvalue computation,
etc.~\cite{rasmussen_2006}. The analogous operations for stable
distributions are highly nonlinear, often requiring technical
multivariable optimization and Monte Carlo codes, 
slowing down the resulting calculation many-fold. 
In this work, for these reasons, we 
restrict out attention to one-dimensional stable
distributions. Numerical schemes for multivariate stable distributions
are an area of current research.

To be more precise, if we denote by $\alpha \in (0,2]$ the {\em
  stability parameter}, $\beta \in [-1, 1]$ the {\em skewness
  parameter}, $\gamma \in \R$ the {\em location parameter}, and
$\lambda \in \R^+$ the {\em scale parameter of $X$}, then these random
variables satisfy the relationship~\cite{zolotarev_1986}:
\begin{equation}
a_1 X_1 + a_2 X_2 \sim a X + 
\begin{cases}
\lambda \gamma (a_1 + a_2 - a) & \alpha \neq 1\\
\lambda \beta(2 /\pi)(a_1 \log(a_1/a) + a_2 \log(a_2/a)) & \alpha =1  
\end{cases}
\qquad
\end{equation}
where, as before $\sim$ is used to denote {\em equality in
  distribution} 
and $a = (a_1^\alpha + a_2^\alpha)^{1/\alpha}$.
Enforcing the previous stability laws, although partially
redundant, places conditions on the characteristic function of $X$
(i.e. the Fourier transform of the probability density
function). Since the
density of the sum of two iid random variables is obtained via
convolution of their individual densities, in the Fourier domain this
is equivalent to multiplication of the characteristic functions.
It can be shown that, in general, 
the class of characteristic functions for stable
distributions must be of the form:
\begin{equation}\label{eq_char}
  \begin{aligned}
    \E \left[ e^{itX} \right] &= \varphi_X(t) \\ 
    &= 
e^{  \lambda ( it\gamma - \vert t \vert^\alpha + 
      it\omega(t,\alpha,\beta)) },
  \end{aligned}
\end{equation}
with
\begin{equation}
    \omega(t,\alpha,\beta) = 
    \begin{cases}
      \vert t \vert^{\alpha-1} \beta \tan\frac{\pi \alpha}{2} \quad & \text{if }
      \alpha \neq 1, \\
      -\frac{2 \beta}{\pi} \log|t| & \text{if } \alpha = 1,
    \end{cases}
\end{equation}
and as before,
\begin{equation}
  \alpha \in (0,2], \quad
  \beta \in [-1,1], \quad
  \gamma \in (-\infty,\infty), \quad
  \lambda > 0.
\end{equation}
This particular parameterization of the characteristic function
$\varphi_X$ in terms of $\alpha$, $\beta$, $\gamma$, and $\lambda$ is
the canonical one~\cite{zolotarev_1986}, and often referred to as the
$\vct{A}$-parameterization.  As discussed in Section~\ref{sec_stable},
we will deal solely with an alternative parameterization, the
$\vct{M}$-parameterization.  This parameterization is obtained by
merely a change of variables in the $x$-parameter but, in contrast
to~\eqref{eq_char}, is jointly continuous in all of its parameters.

Often, the form of the above characteristic function is taken to be
the {\em definition} of stable distributions because of the absence of
an analytic form of the inverse transform.
Special cases of these distributions are normal random variables
($\alpha=2$ and $\beta = 0$), Cauchy distributions ($\alpha=1$ and
$\beta=0$), and the L\'evy distribution ($\alpha=0.5$ and
$\beta=1$). Each of these distributions has a closed-form expression
for its density and cumulative distribution function. However, as
mentioned before, in
general, the density and distributions functions for stable random
variables have no known analytic form, and are expressible only via
their Fourier transform or special-case asymptotic series. Because of
this, performing inference or developing models based on these
distribution laws can be computationally intractable if the density
and distribution functions are expensive to compute (i.e. if
numerically evaluating the corresponding Fourier integral is expensive).
We will focus our attention on the numerical evaluation of the density
function for a unit, centered distribution: $\gamma = 0$ and $\lambda
= 1$. We will denote this class of unit, centered stable distributions
in the $\vct{A}$-parameterization 
as $\mathcal S(\alpha,\beta,\vct{A})$, and say that $X \sim \mathcal
S(\alpha,\beta,\vct{A})$ if $X$ has characteristic function~\eqref{eq_char}
with $\gamma = 0$ and $\lambda = 1$. In a slight change of notation
from~\cite{nolan_2015}, we
make the particular parameterization explicit in the definition of
$\mathcal S(\alpha, \beta, \cdot)$.

Most existing numerical methods for the evaluation of the
corresponding density function $f$,
\begin{equation}
f(x; \alpha, \beta) = \frac{1}{2\pi} \int_{-\infty}^\infty
\varphi_X(t) \, e^{-itx} \, dt,
\end{equation}
rely on some form of numerical integration~\cite{nolan_1997} or,
in the symmetric case ($\beta = 0$),
asymptotic expansions for extremal values 
of $x$, and $\alpha$~\cite{matsui_2004}.
Often, if the shape parameters of the
distribution are being inferred or estimated from data, as in the case
of maximum likelihood calculations or Bayesian modelling, the density
function~$f$ must be evaluate at the same $x$ values for {\em many}
values of the parameters~$\alpha$ and~$\beta$. In order to ensure the
accuracy of the numerical integration scheme, often adaptive quadrature
is used. However, for new values of the parameters, no previous
information can be used, nor are these quadratures optimal (as in the
sense that the trapezoidal rule is optimal for smooth periodic
function or that Gauss-Legendre quadrature is
optimal for polynomials on finite intervals~\cite{dahlquist_2003}).

The main contribution of this work is to develop an efficient means by
which to evaluate the density function for stable random variables
using various integral formulations, optimized quadrature schemes, and
asymptotic expansions. We develop generalized Gaussian quadrature
rules~\cite{bremer_2010b, yarvin_1998} which are able to evaluate the
density $f$ for various {\em ranges} of the shape parameters $\alpha$,
$\beta$, as well as the argument $x$. We generate quadratures that
consist of a {\em single} set of weights and nodes which are able to
integrate the characteristic function (or deformations thereof) for
large regions of the parameters and argument space.  Using a small
collection of such quadrature rules we are able to cover most of 
$\alpha \beta x$-space. This class of quadrature schemes is an extension of
the classical Gaussian quadrature schemes for polynomials which are
able to {\em exactly} integrate polynomials of degree $d \leq 2n - 1$
using $n$ nodes and $n$ weights (i.e. using a total of $2n$ degrees of
freedom).  We discuss these quadrature rules in detail in
Section~\ref{sec_gengauss}, as well as 
derive new asymptotic expansions in the asymmetric case in the 
$\vct{M}$-parameterization.
For regions with large~$x$, we derive efficient 
asymptotic expansions which can be used for evaluation.

The paper is organized as follows: in Section~\ref{sec_stable} we
review some standard integral representations and asymptotic
expansions for the density functions of stable distributions.  In
Section~\ref{sec_gengauss} we discuss the procedure for constructing
generalized Gaussian quadratures for evaluating the integral
representations presented in the previous section.  In
Section~\ref{sec_algorithm}, we demonstrate the effectiveness of our
schemes for evaluating the density functions via various numerical
examples.  In Section~\ref{sec_conclusions}, the conclusion, we
discuss some additional areas of research, and point out regimes in
which the algorithms of this paper are not applicable.

\section{Stable distributions}
\label{sec_stable}

In this section we review some basic facts regarding stable
distributions, and review the integral and asymptotic expansions that
we will use to evaluate the density function.
As mentioned in the previous section, there are several different
parameterizations of stable densities. We now detail the
parameterization useful for numerical calculations, most commonly
referred to as Zolotarev's \mbox{$\vct{M}$-parameterization}. Random 
variables that follow stable distributions with 
this parameterization will be
denoted $X \sim \mathcal S(\alpha, \beta, \vct{M})$.

\subsection{Basic Facts}
There are a number of different parameterizations for stable
distributions, each of which is useful in a particular regime:
integral representations, asymptotic expansions, etc. 
It was shown in \cite{nolan_1997} that Zolotarev's 
$\vct{M}$-parameterization is particularly useful for numerical
computations as it allows for the computation of a {\em unit} density
that can be later scaled and translated. 
In our numerical scheme, we also use the $\vct{M}$-parameterization because
of its continuity in all underlying parameters. This is a standard
procedure among other numerical methods as well.

This parameterization, and therefore the density function,
 is defined by the characteristic function
\begin{equation}\label{eq_charac}
\begin{aligned}
\varphi_X(t) &= 
e^{ \lambda ( it \gamma - |t|^\alpha + 
it \omega_M(t, \alpha, \beta) ) }, \\
\omega_M(t, \alpha, \beta) &= 
\begin{cases}
  (|t|^{\alpha-1} - 1) \beta \tan\frac{\pi \alpha}{2} \quad 
  & \text{if } \alpha \neq 1, \\
  - \frac{2\beta}{\pi} \log |t| & \text{if } \alpha = 1.
\end{cases}
\end{aligned}
\end{equation}
For the rest of the paper, we will work with unit stable laws 
($\gamma = 0$, $\lambda = 1$) unless otherwise mentioned.
We will refer to the case of $\beta=0$ as the {\em symmetric} case,
and otherwise for $\beta \neq 0$ the {\em asymmetric} case.
In general, the parameters $\alpha$ and $\beta$ cannot be
interchanged between different parameterizations, and in this
particular case the change of variables from the $\vct{A}$- to the
$\vct{M}$-parameterization is given by:
\begin{equation}
\alpha_A =  \alpha_M = \alpha,
\qquad \beta_A = \beta_M  = \beta,
\qquad \gamma_A = \gamma_M - \beta \tan \frac{\pi \alpha}{2} , 
\qquad \lambda_A = \lambda_M
\end{equation}
where the subscripts are used to denote the parameterization.
Under this change of variable,
the characteristic function in the $\vct{A}$-parameterization
lacks the term $-it \beta \tan (\pi \alpha /2)$ in the exponent.
The existence of this term in the $\vct{M}$-parameterization
makes characteristic function continuous at $\alpha = 1$.
This change of variables is mostly done for analytical convenience.
However, the mode of the density in the $\vct{A}$-parameterization
approaches infinity as $\alpha \to 1$ if $\beta \neq 0$.
Therefore, neither of the one-sided limits $\alpha \to 1^\pm $ 
equals a distribution in the $\vct{A}$-parameterization~\cite{zolotarev_1986}.

From~\eqref{eq_charac}, it follows that for unit stable laws,
\begin{equation}
\label{eq_char_symmetry}
\varphi_X(-t; \alpha, \beta) = \varphi_X(t; \alpha, -\beta) 
= \overline{ \varphi_X(t; \alpha, \beta) }, \\
\end{equation}
where $\overline{z}$ denotes the complex-conjugate of $z$, and 
we have used the Fourier transform convention
\begin{equation}
  \begin{aligned}
    \varphi_X(t) &= \E \left[ e^{itX} \right] \\
    &= \int_{-\infty}^\infty f(x) \, e^{itx} \, dx.
  \end{aligned}
\end{equation}
The density $f$ is therefore given by:
\begin{equation}
f(x) = \frac{1}{2\pi} \int_{-\infty}^\infty \varphi_X(t) \, e^{-itx}
\, dt.
\end{equation}
In conjunction with \eqref{eq_char_symmetry}, one can show that
\begin{equation}
\label{eq_inverse_Fourier_abstract}
\begin{aligned}
  f(x;\alpha, \beta) & = \frac{1}{2\pi} \int_{-\infty}^\infty e^{-itx}
  \, \varphi_X(t; \alpha, \beta) \, dt \\
  & = \frac{1}{2\pi} \left( \overline{ \int_0^\infty e^{itx} \, 
      \varphi_X(t; \alpha, -\beta) \, dt } 
    + \int_{-\infty}^{0} e^{-itx} \, \varphi_X(t; \alpha, \beta) \right) \\
  & = \frac{1}{\pi} \Re \int_0^\infty e^{itx} \,
  \varphi_X(t;\alpha,-\beta)   \, dt. \\
\end{aligned}
\end{equation}
Furthermore,~\eqref{eq_char_symmetry} and the Fourier inversion
formula imply that
\begin{equation}
  \label{eq_den_symmetry}
  f(-x; \alpha, \beta) = f(x; \alpha, -\beta).
\end{equation}
This symmetry allows for considerable restriction in the relevant
values of $x$. In particular, for $\alpha \neq 1$, defining
$\zeta(\alpha,\beta) = \beta\tan\pi\alpha/2$, we need only address
the case $x > \zeta$. 
Indeed, if $x < \zeta$, then 
\begin{equation}
-x > -\zeta(\alpha, \beta) = \zeta(\alpha, -\beta)
\end{equation}
as can be seen from~\eqref{eq_charac}.

\subsection{Integral representations}
\label{sec_intrep}

Inserting $\varphi_X$ from~\eqref{eq_charac} into the inverse Fourier
transform~\eqref{eq_inverse_Fourier_abstract}, we see that 
\begin{equation}
\label{eq_inverse_Fourier}
  f(x; \alpha, \beta) = \frac{1}{\pi} \int_0^\infty
  \cos( h(t;x, \alpha, \beta)) \, e^{-t^\alpha} \, dt,
\end{equation}
where for $\alpha \neq 1$
\begin{equation}
  \begin{aligned}
    h(t; x, \alpha, \beta) &= (x - \zeta) t + \zeta t^\alpha, \\
    \zeta(\alpha,\beta)  &= -\beta \tan \frac{\pi\alpha}{2},
  \end{aligned}
\end{equation}
and for $\alpha=1$
\begin{equation}
  \begin{aligned}
    h(t; x, \alpha, \beta) &=    xt + \frac{2\beta t}{\pi}  \log t, \\
    \zeta(\alpha,\beta)  &= 0.
  \end{aligned}
\end{equation}
Because of the linear dependence of $h$ on $t$, the integrand
in~\eqref{eq_inverse_Fourier} is oscillatory for modestly sized values
of $x$.  See Figure~\ref{fig_oscillatory} for a plot of this
integrand.  For numerical calculations, the infinite interval of
integration can be truncated based on the decay of the exponential
term if $x$ is not too large.  However, for small values of $\alpha$,
this region of integration can still be prohibitively large.
Furthermore, for large $\vert x\vert$, the integrand becomes
increasingly oscillatory and standard integration schemes
(e.g. trapezoidal rule, Gaussian quadrature) not only become
expensive, but lose accuracy due to the oscillation.  It is possible
that Filon-type quadratures~\cite{olver_2008} could be applicable, but
this has yet to be thoroughly investigated in the literature.
Section~\ref{sec_altggquad} contains a brief discussion of quadrature
techniques for highly oscillatory integrands, but these methods,
unfortunately, would likely prove to be more computationally expensive
than those techniques presented in this work.
 For these reasons, this
 representation of the density $f$ in~\eqref{eq_inverse_Fourier}
 cannot be used efficiently in the following 
parameter ranges: small $\alpha$ and/or large $\vert x \vert$.

\begin{figure}[t!]
  \begin{center}
    \includegraphics[width=.5\linewidth]{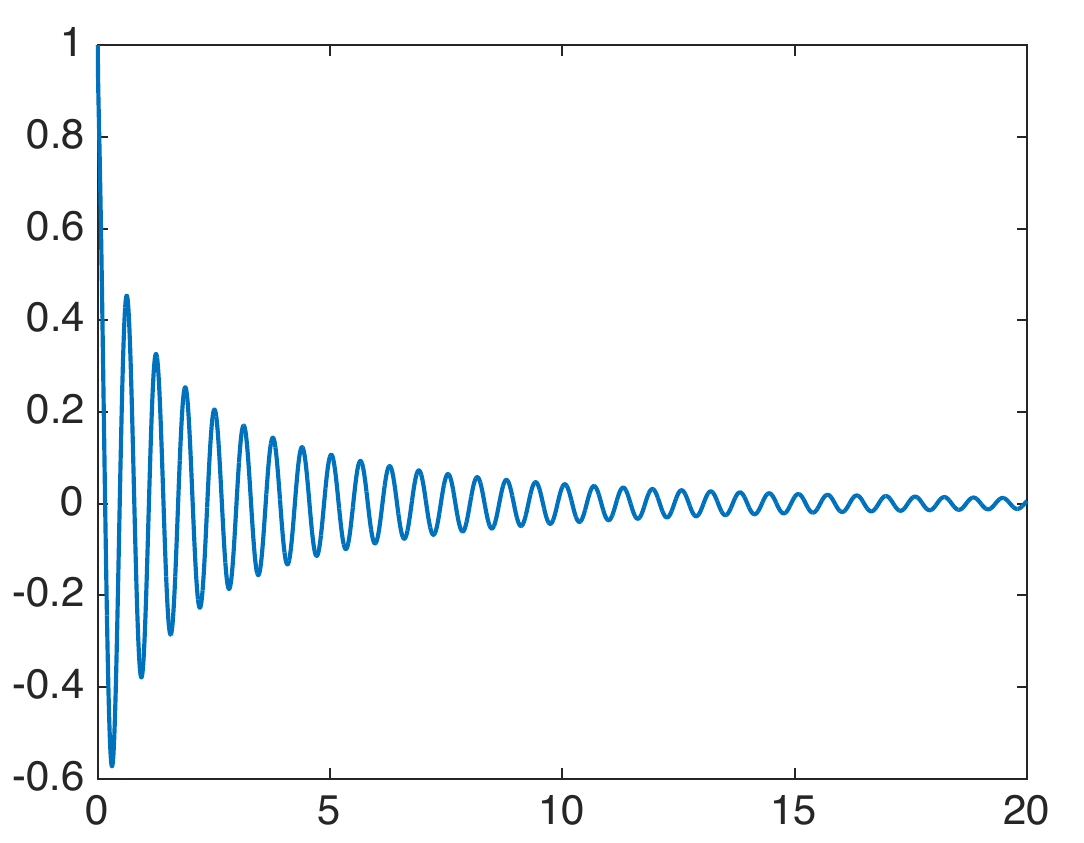} 
  \end{center}
  \caption{Graph of the Fourier transform of $f$, 
    i.e. the integrand in $\eqref{eq_inverse_Fourier}$, for parameter
    values of $x = 10$, $\alpha = .5$ and $\beta = 0$.}
  \label{fig_oscillatory}
\end{figure}

Alternatively, the integral in~\eqref{eq_inverse_Fourier} can be
rewritten using the method of stationary phase. This calculation was
done in~\cite{nolan_1997}. To this end, we begin by
rewriting~\eqref{eq_inverse_Fourier} as
\begin{equation}
f(x; \alpha, \beta) = \frac{1}{\pi} \Re \int_0^\infty e^{ih(z) -
  z^\alpha} \, dz,
\end{equation}
where $\Re  w$ denotes the real part of the complex number $w$ and we
have suppressed the explicit dependence on $\alpha$ and $\beta$ for
simplicity. Deforming along the contour with zero phase, we have
\begin{equation}\label{eq_statphas}
f(x; \alpha, \beta) = \frac{\alpha}{\pi |\alpha - 1| } 
\frac{1}{(x - \zeta)} \int_{-\theta_0}^{\frac{\pi}{2}}
g(\theta;x,\alpha,\beta) \, 
e^{-g(\theta;x,\alpha,\beta) } \, d\theta,
\end{equation}
with 
\begin{equation}
g(\theta;x,\alpha,\beta) 
= \left( x - \zeta \right)^{\frac{\alpha}{\alpha-1}} \, V(\theta;\alpha,\beta),
\end{equation}
where for $\alpha \neq 1$
\begin{equation}
  \begin{aligned}
    \zeta(\alpha,\beta) &= -\beta \tan \frac{\pi\alpha}{2}, \\
    \theta_0(\alpha,\beta) &= \frac{1}{\alpha} 
    \arctan\left(\beta \tan \frac{\pi \alpha}{2} \right), \\
    V(\theta;\alpha,\beta) &=
    \left( \cos \alpha\theta_0 \right)^{ \frac{1}{\alpha - 1} } \left(
      \frac{\cos \theta}{\sin \alpha(\theta_0 + \theta) } \right)^{
      \frac{\alpha}{\alpha-1}} \frac{ \cos \left( \alpha \theta_0 +
        (\alpha-1) \theta \right) }{ \cos \theta},
\end{aligned}
\end{equation}
and where for $\alpha = 1$ 
\begin{equation}
  \begin{aligned}
    \zeta(\alpha,\beta) &= 0, \\
    \theta_0(\alpha,\beta) &= \frac{\pi}{2}, \\
    V(\theta;\alpha,\beta) &= \frac{2}{\pi} \left( \frac{
        \frac{\pi}{2} + \beta \theta }{\cos \theta} \right) \exp
    \left( \frac{1}{\beta} \left( \frac{\pi}{2} + \beta\theta \right)
      \tan \theta \right).
  \end{aligned}
\end{equation}
While seemingly more complicated than that
in~\eqref{eq_inverse_Fourier}, the integrand in~\eqref{eq_statphas} is
strictly positive, has no oscillations and the interval of integration
is finite.  Unfortunately, this is not a fail-safe transformation.
In particular, for very small and very large $x$, and $\alpha$ close
to $1$ and $2$, the integrand has large derivatives (e.g. very spiked)
and is hard to efficiently integrate, see Figure~\ref{fig_spiked}.
 For this reason, previous
schemes~\cite{matsui_2004, nolan_1997} have used zero-finding methods
to locate the integrand's unique extremum point $\theta_{max}$,
where~$g(\theta_{max}) = 1$.  Subsequently, adaptive quadrature
schemes were applied to the two subintervals created by splitting the
original interval of integration at~$\theta_{max}$.
This procedure is often computationally expensive.

\begin{figure}[t]
  \begin{center}
    \includegraphics[width=.5\linewidth]{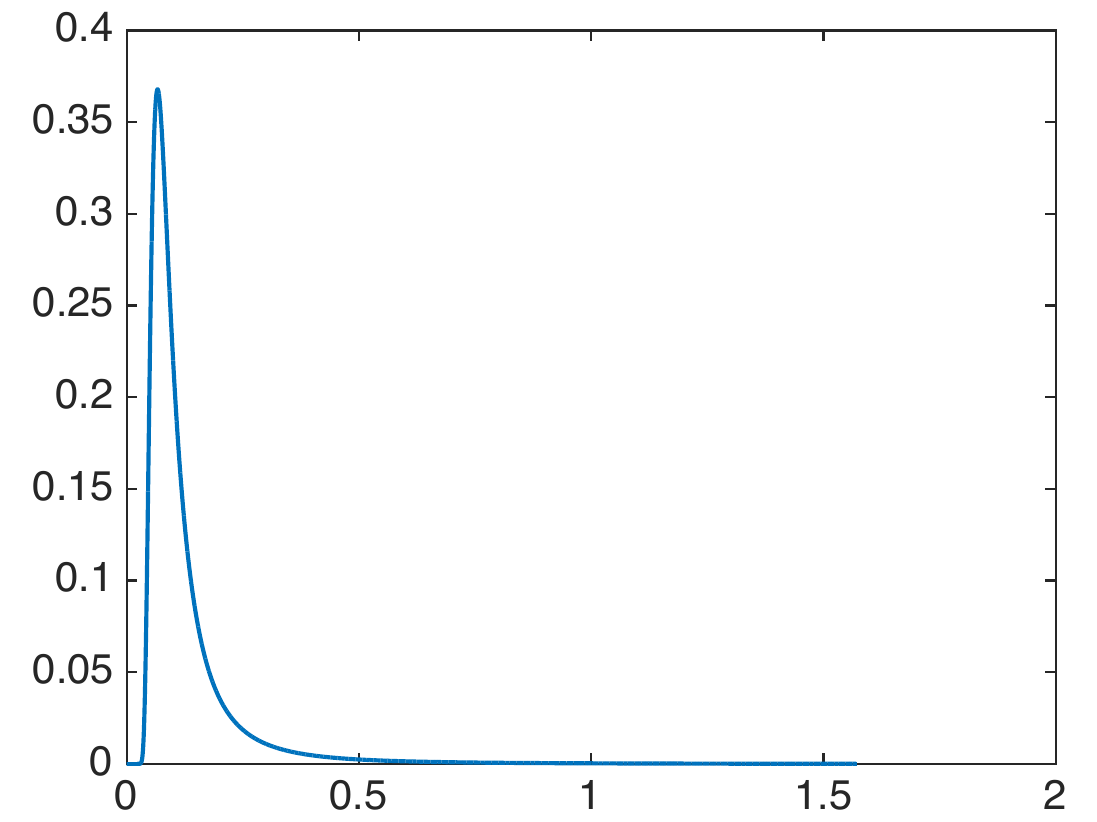} 
  \end{center}
  \caption{Graph of the stationary phase integrand in $\eqref{eq_statphas}$ for
    parameter values of $x = 10^{-1}$, $\alpha = 1.5$ and $\beta =
    0$. Note the large derivatives and spiked behavior.}
  \label{fig_spiked}
\end{figure}

Even though an expression for the cumulative distribution
function (CDF) can be derived by straightforward integration
of~\eqref{eq_statphas}, this is not ideal for various numerical
considerations described later.
We obtain an expression for $F$, the CDF for $f$, by 
using an inversion theorem found in~\cite{shep_1991}:
\begin{equation}    \label{eq_cdf}
  \begin{aligned}
    F(x) &= \frac{1}{2} - \frac{1}{2\pi} \int_0^\infty
    \left(\varphi(t)e^{-ixt}  - \varphi(-t) \, e^{ixt} \right) \, 
    \frac{dt}{it} \\
    &= \frac{1}{2} + \frac{1}{\pi} \int_0^\infty \sin(h(t; x, \alpha,
    \beta)) \, e^{-t^\alpha} \, \frac{dt}{t},
\end{aligned}
\end{equation}
The theorem in \cite{shep_1991} assumes
the existence of a mean of the random variable which is 
associated with the characteristic function.
However, stable variates with $\alpha < 1$ do not have a mean.
Fortunately, one can relax the assumptions of the theorem
to integrability of the integrand in \eqref{eq_cdf}.
This means that the expression is valid for all parameter combinations.
The integrand of~\eqref{eq_cdf} has similar behavior as 
the integrand in~\eqref{eq_inverse_Fourier}, and therefore we expect
that similar numerical schemes for evaluating the integral will be
applicable.
In the asymmetric case however, the integrand has a singularity at the origin
when $\alpha < 1$ and $\beta \neq 1$.
The advantages of this representation are explained in detail in
Section~\ref{sec_algorithm}.

\subsection{Series and asymptotics}
\label{sec_asymp}

Fortunately, there are series and asymptotic expansions for
Zolotarev's $\vct{M}$-parameterization which nicely compliment 
the integral representations above.  Specifically, they yield accurate
results for very small and large $x$.  Zolotarev derived series
and asymptotic expansions for the $\vct B$-parameterization 
in~\cite{zolotarev_1986}, and in the following, we will 
derive similar representations for the
$\vct M$-parameterization valid in the general case\footnote{After the publication of this article, we were made aware of related prior work on asymptotic expansions in \cite{teimouri2008stable}.}.

\begin{lemma}
\label{lem_series_0}
Let $\alpha \neq 1$, $\beta \in [-1,1]$, $\zeta = -\beta \tan( \pi \alpha/2)$, and 
\begin{equation}
  \label{eq_series_at_zero}
  S^0_{n}(x; \alpha, \beta ) := \frac{1}{\alpha \pi} \sum_{k=0}^{n} 
  \frac{ \Gamma( \frac{k+1}{\alpha} )}{ \Gamma( k+1) } 
  (1+\zeta^2)^{ - \frac{k+1}{ 2\alpha}} 
  \sin \left( \left[ \pi/2 + (\arctan \zeta)/\alpha \right]
    (k+1) \right) (x - \zeta)^k.
\end{equation}
Then for any $n \in \N$, 
\begin{equation}
\label{eq_error_estimate_at_0}
|f(x; \alpha, \beta ) - S_{n-1}^0(x; \alpha, \beta)| \leq \frac{1}{\alpha \pi} 
		\frac{ \Gamma( \frac{n+1}{\alpha} )}{ \Gamma( n+1) } 
		(1+\zeta^2)^{-\frac{n+1}{2\alpha}} 
		| x - \zeta |^n.
\end{equation}
\end{lemma}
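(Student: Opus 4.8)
The plan is to start from the complex form of the inversion integral \eqref{eq_inverse_Fourier}, namely
\begin{equation}
f(x;\alpha,\beta) = \frac{1}{\pi}\,\Re\int_0^\infty e^{i(x-\zeta)t}\,e^{-(1-i\zeta)t^\alpha}\,dt,
\end{equation}
which follows by writing $\cos h = \Re e^{ih}$ and substituting $h = (x-\zeta)t + \zeta t^\alpha$. The idea is to Taylor-expand only the oscillatory factor $e^{i(x-\zeta)t}$ to order $n-1$, integrate the resulting monomials $t^k$ against the stretched-exponential weight $e^{-(1-i\zeta)t^\alpha}$, and show that the result is exactly the degree-$(n-1)$ truncation $S^0_{n-1}$, while the integral of the Taylor remainder yields the claimed bound.

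First I would record the complex moment identity
\begin{equation}
\int_0^\infty t^k\,e^{-(1-i\zeta)t^\alpha}\,dt = \frac{1}{\alpha}\,(1-i\zeta)^{-\frac{k+1}{\alpha}}\,\Gamma\!\left(\tfrac{k+1}{\alpha}\right),
\end{equation}
valid since $\Re(1-i\zeta)=1>0$, which follows from the elementary real formula $\int_0^\infty t^k e^{-r t^\alpha}\,dt = \frac1\alpha r^{-(k+1)/\alpha}\Gamma(\tfrac{k+1}{\alpha})$ by analytic continuation in the exponent. Writing $1-i\zeta = \sqrt{1+\zeta^2}\,e^{-i\arctan\zeta}$ gives $(1-i\zeta)^{-\frac{k+1}{\alpha}} = (1+\zeta^2)^{-\frac{k+1}{2\alpha}}e^{\,i(k+1)\arctan\zeta/\alpha}$. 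Combining this with the factor $i^k$ coming from $(i(x-\zeta))^k$ and taking real parts, the identity $\Re\big[i^k e^{i(k+1)\arctan\zeta/\alpha}\big] = \cos\!\big(\tfrac{k\pi}{2} + (k+1)\tfrac{\arctan\zeta}{\alpha}\big) = \sin\!\big([\pi/2 + (\arctan\zeta)/\alpha](k+1)\big)$ turns the $k$-th term into precisely the summand of \eqref{eq_series_at_zero}, with the $1/k!=1/\Gamma(k+1)$ and $1/(\alpha\pi)$ prefactors matching.

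For the error estimate I would rotate the contour onto the ray $\arg t = \phi := (\arctan\zeta)/\alpha$, chosen so that $(1-i\zeta)t^\alpha$ becomes real: writing $t = s e^{i\phi}$ turns the weight into $e^{-\sqrt{1+\zeta^2}\,s^\alpha}$, which is exactly what produces the sharp factor $(1+\zeta^2)^{-\frac{n+1}{2\alpha}}$ upon integrating $s^n$. Applying the remainder bound $\big|e^{w}-\sum_{k=0}^{n-1}w^k/k!\big| \le |w|^n/n!$ (valid when $\Re w \le 0$) with $w = i(x-\zeta)se^{i\phi}$, for which $\Re w = -(x-\zeta)s\sin\phi$, the remainder is controlled by $|x-\zeta|^n s^n/n!$, and integrating against the real weight gives
\begin{equation}
\big|f(x;\alpha,\beta) - S^0_{n-1}(x;\alpha,\beta)\big| \le \frac{|x-\zeta|^n}{\pi\,n!}\int_0^\infty s^n e^{-\sqrt{1+\zeta^2}\,s^\alpha}\,ds = \frac{1}{\alpha\pi}\frac{\Gamma(\frac{n+1}{\alpha})}{\Gamma(n+1)}(1+\zeta^2)^{-\frac{n+1}{2\alpha}}|x-\zeta|^n,
\end{equation}
which is \eqref{eq_error_estimate_at_0}.

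The main obstacle is the contour rotation together with the sign condition it imposes. Justifying the deformation requires controlling the integrand on the connecting arc, which is delicate for $\alpha<1$ because $e^{-\sqrt{1+\zeta^2}\,s^\alpha}$ decays only sub-exponentially; a Jordan-type argument on the sector $\{0\le\arg t\le\phi\}$, where one verifies $\cos(\alpha\theta-\arctan\zeta)>0$ throughout, should close this. More seriously, the clean remainder bound needs $\Re w\le0$, i.e.\ $(x-\zeta)$ and $\zeta$ of the same sign, which in the reduced regime $x>\zeta$ holds only when $\zeta\ge0$. In the complementary case $\zeta(x-\zeta)<0$ the rotated oscillatory factor grows (and the rotated integral even diverges for $\alpha<1$), so a single rotation to the real-weight ray is inadmissible. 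Note that the symmetry $(\beta,x)\mapsto(-\beta,-x)$ leaves both sides of \eqref{eq_error_estimate_at_0} and the product $\zeta(x-\zeta)$ invariant, so it does not reduce the unfavorable sign case to the favorable one; that regime would instead require rotating by a reduced angle and re-optimizing the decay rate, or a separate steepest-descent estimate.
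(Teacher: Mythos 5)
Your argument is, in substance, the paper's own proof: the same term-by-term expansion of the oscillatory factor $e^{i(x-\zeta)t}$, the same change of variables $s=(1-i\zeta)^{1/\alpha}t$ with a rotation of the contour to the real axis (justified by Zolotarev's Lemma 2.2.2), and the same bound of the Taylor remainder of the exponential by its first omitted term. The obstacle you flag at the end is therefore not a shortcoming of your write-up relative to the paper; it is a genuine gap in the paper's proof, which applies the remainder estimate $\bigl|e^{w}-\sum_{k<n}w^k/k!\bigr|\le|w|^n/n!$ (citing ``the mean value theorem'') without verifying the condition $\Re w\le 0$ that you correctly identify as necessary. On the rotated ray one has $\Re w=-(x-\zeta)(1+\zeta^2)^{-1/(2\alpha)}s\sin\bigl((\arctan\zeta)/\alpha\bigr)$, so the condition is exactly $\zeta(x-\zeta)\ge 0$, and the paper nowhere assumes it.

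Moreover, the restriction cannot be argued away, because \eqref{eq_error_estimate_at_0} is actually false when $\zeta(x-\zeta)<0$. Take $\alpha=\tfrac12$, $\beta=1$, so $\zeta=-1$ and $\pi/2+(\arctan\zeta)/\alpha=0$, whence $S^0_n\equiv 0$ for every $n$. On the other hand, the characteristic function \eqref{eq_charac} then equals $e^{-\sqrt{-2it}}\,e^{-it}$ for $t>0$, i.e.\ $X$ is a standard L\'evy variable translated by $\zeta$, so that
\begin{equation*}
f(x;\tfrac12,1)=\frac{1}{\sqrt{2\pi}}\,(x-\zeta)^{-3/2}\,e^{-1/(2(x-\zeta))},\qquad x>\zeta.
\end{equation*}
At $x-\zeta=\tfrac13$ this gives $f=\frac{3\sqrt{3}}{\sqrt{2\pi}}e^{-3/2}\approx 0.462$, while the right-hand side of \eqref{eq_error_estimate_at_0} with $n=1$ is $\frac{2}{\pi}\,\frac{\Gamma(4)}{\Gamma(2)}\,2^{-2}\cdot\tfrac13=\frac{1}{\pi}\approx 0.318$: the bound fails, and with it the paper's subsequent assertion that the truncation bound ``holds regardless of the parameters.'' Your diagnosis of the mechanism is also exactly right: for $\alpha<1$ in this regime the integrand on the fully rotated ray grows linearly-exponentially against only sub-exponential decay, so the deformation itself is inadmissible, and since the inequality is false there, no re-optimized rotation angle or steepest-descent variant can repair it. In short, your proof is complete precisely on the region $\zeta(x-\zeta)\ge 0$ (which contains the entire symmetric case $\zeta=0$), the paper's proof is implicitly subject to the same restriction, and some hypothesis of this kind must be added to the lemma for it to be true.
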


\begin{proof}
To obtain a series representation centered at $x = \zeta$, 
we follow the derivation in~\cite{zolotarev_1986}, but
for the $\vct{M}$-parameterization instead of the $\vct{B}$-parameterization:
\begin{equation}
\label{eq_0der1}
\begin{aligned}
  f(x;\alpha, \beta) & = \frac{1}{\pi} \Re \int_0^\infty e^{itx} \, 
  \varphi_X(t;\alpha,-\beta) \, dt \\
  & = \frac{1}{\pi} \Re \int_0^{\infty} \sum_{k=0}^\infty  
  \frac{ (it \left[ x-\zeta\right] )^k }{k!} 
  \exp\left( - (1-i\zeta) t^\alpha \right) \,  dt \\
  & = \frac{1}{\pi} \sum_{k=0}^{n-1}  
  \frac{( x-\zeta )^k }{k!}  \Re \int_0^{\infty} (it)^k
  \exp\left( - (1-i\zeta) t^\alpha \right) \,  dt  + R_n,
\end{aligned}
\end{equation}
where
\begin{equation}
R_n = \frac{1}{\pi} \Re \int_0^\infty \left[ e^{it (x- \zeta) } - \sum_{k=0}^{n-1} \frac{ (it [x-\zeta] )^k }{k!}\right]
		\exp( - (1-i \zeta ) t^\alpha ) \ dt
\end{equation}

Applying the change
of variables \mbox{$s = (1-i\zeta)^{1/\alpha} t$} to the last integral 
in \eqref{eq_0der1} and subsequently
rotating the contour of integration to the real axis yields
\begin{equation}
  \begin{aligned}
    f(x;\alpha, \beta) &= S_{n-1}(x;\alpha, \beta) + R_n^0.
  \end{aligned}
\end{equation}
The change of contour can be justified with Lemma 2.2.2
in~\cite{zolotarev_1986}.  It remains to show that $R_n$ is bounded in
magnitude by~\eqref{eq_error_estimate_at_0}.  Indeed,
\begin{equation}
\begin{aligned}
|R_n | & = \frac{1}{\pi}  \left| \Re \int_0^\infty \left( e^{it(x-\zeta)} - \sum_{k=0}^{n-1} \frac{[it(x-\zeta)]^k}{k!} \right) e^{ - (1-i\zeta) t^\alpha }  \ dt \right| \\
& = \frac{1}{\pi} \left| \Re \int_0^\infty \left( \sum_{k=n}^\infty \frac{( i [1-i\zeta]^{-1/\alpha} s [x-\zeta])^k}{k!} \right) e^{ - s^\alpha } \ \frac{ds}{(1-i\zeta)^{1/\alpha}} \right| \\
& \leq \frac{ |x-\zeta|^n}{\pi n!} (1+\zeta^2)^{-\frac{n+1}{2 \alpha}} \int_0^\infty s^n e^{-s^\alpha} ds \\
& = \frac{1}{\alpha \pi} 
		\frac{ \Gamma( \frac{n+1}{\alpha} )}{ \Gamma( n+1) } 
		(1+\zeta^2)^{-\frac{n+1}{2\alpha}} 
		| x - \zeta |^n.
\end{aligned}
\end{equation}
The second equality comes from the change of variables
\mbox{$s = (1-i \zeta)^{1/\alpha} t$}, and a rotation of the contour
of integration to the real axis.  The difference between the
exponential and the sum of the first $n-1$ terms of its power series
can bounded by the $n^{th}$ term with the mean value theorem.
\end{proof}

As a consequence, the series~\eqref{eq_series_at_zero} converges
to the density as $n \to \infty$ for $\alpha > 1$.
For $\alpha < 1$, \eqref{eq_series_at_zero} is not convergent, 
but can be used as an
asymptotic expansion as $x \to \zeta$ if $\beta \neq 1$.
While the truncation error bound \eqref{eq_error_estimate_at_0}
holds regardless of the parameters, \eqref{eq_series_at_zero}
does not capture the asymptotic behavior of the
density as $x \to \zeta^+$ if $\alpha < 1$ and $\beta = 1$.
Indeed, \eqref{eq_series_at_zero} is identically zero for this parameter choice.
On the other hand, the density falls off exponentially as $x \to \zeta^+$,
which an asymptotic expansion in~\cite{zolotarev_1986} reveals.
Unfortunately, this expansion cannot be efficiently evaluated numerically
because its coefficients do not have a closed form.

Still, by rearranging \eqref{eq_error_estimate_at_0}, we find that
truncating~\eqref{eq_series_at_zero} after the $n-1$ term
is accurate to within~$\eps$ of the true value for all~$x$
satisfying
\begin{equation}
\begin{aligned}
  |x-\zeta| &\leq  \left[ \eps \alpha \pi 
    (1+\zeta^2)^{\frac{n+1}{2\alpha} } 
    \frac{ \Gamma(n+1)}{\Gamma(\frac{n+1}{\alpha}) } \right]^{1/n} \\
&:= B^0_n(\alpha, \beta).
\end{aligned}
\end{equation}
After the discussion in the paragraph above, it is important to stress that
this bound guarantees {\it absolute accuracy} of the truncated series
, rather than {\it relative accuracy}.

\begin{lemma}
Let $\alpha \neq 1$, $\beta \in [-1,1]$, $\zeta = -\beta \tan( \pi \alpha/2) $, and 
\begin{equation}
  \label{eq_series_at_infinity}
  S^\infty_{n}(x; \alpha, \beta ) := \frac{\alpha}{\pi} \sum_{k=1}^n (-1)^{k+1} 
    \, \frac{\Gamma(\alpha k)}{\Gamma(k)} \, (1+\zeta^2)^{k/2}  \, 
    \sin([\pi \alpha / 2 - \arctan \zeta] k) \, (x-\zeta)^{-\alpha k-1} .
\end{equation}
Then for any $n \in \N$, 
\begin{equation}
  \label{eq_error_estimate_at_infinity}
  |f - S_{n-1}^\infty| \leq \frac{\alpha}{ \pi} 
  \frac{ \Gamma( \alpha n)}{ \Gamma( n ) } 
  (1+\zeta^2)^{\frac{n}{2}} 
  | x - \zeta |^{-\alpha n - 1} .
\end{equation}
\end{lemma}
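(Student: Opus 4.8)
The plan is to mirror the derivation of Lemma~\ref{lem_series_0}, but to expand the factor $e^{-(1-i\zeta)t^\alpha}$ rather than the factor $e^{it(x-\zeta)}$, thereby generating \emph{descending} rather than ascending powers of $x-\zeta$. Starting from the same representation used there,
\begin{equation*}
f(x;\alpha,\beta) = \frac1\pi\,\Re\int_0^\infty e^{it(x-\zeta)}\,e^{-(1-i\zeta)t^\alpha}\,dt ,
\end{equation*}
I would insert the truncated power series $e^{-(1-i\zeta)t^\alpha} = \sum_{k=0}^{n-1}\frac{(-(1-i\zeta))^k}{k!}\,t^{\alpha k} + r_n(t)$ and integrate term by term against $e^{it(x-\zeta)}$. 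Each resulting moment integral is evaluated via the standard identity $\int_0^\infty t^{\alpha k}e^{it(x-\zeta)}\,dt = \Gamma(\alpha k+1)\,(-i(x-\zeta))^{-(\alpha k+1)}$, valid for $x-\zeta>0$ once the contour is rotated onto the positive imaginary axis; this rotation is justified exactly as in Lemma~\ref{lem_series_0}, by Lemma~2.2.2 of~\cite{zolotarev_1986}.

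To recover the stated closed form I would write $1-i\zeta = \sqrt{1+\zeta^2}\,e^{-i\arctan\zeta}$ together with $-i = e^{-i\pi/2}$, so that the phase of the $k$-th term collapses to $e^{i((\pi\alpha/2-\arctan\zeta)k+\pi/2)}$; taking real parts then produces the factor $-\sin((\pi\alpha/2-\arctan\zeta)k)$, while $(1-i\zeta)^k$ contributes the modulus $(1+\zeta^2)^{k/2}$. Finally, the identity $\Gamma(\alpha k+1)/\Gamma(k+1) = \alpha\,\Gamma(\alpha k)/\Gamma(k)$ converts the $k$-th term into precisely the summand of $S^\infty_n$ in~\eqref{eq_series_at_infinity}. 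In particular the $k=0$ term vanishes because $\sin 0 = 0$, which explains why the sum begins at $k=1$ and why no spurious $(x-\zeta)^{-1}$ term appears.

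For the error estimate~\eqref{eq_error_estimate_at_infinity} I would bound $r_n$ by its first omitted term just as in the previous lemma: since $\Re(-(1-i\zeta)t^\alpha) = -t^\alpha\le 0$ on the real axis, the mean value theorem gives $|r_n(t)|\le (1+\zeta^2)^{n/2}\,t^{\alpha n}/n!$. One then rotates the remainder integral $\frac1\pi\int_0^\infty e^{it(x-\zeta)}r_n(t)\,dt$ onto the imaginary axis, turning $e^{it(x-\zeta)}$ into the decaying $e^{-(x-\zeta)\tau}$, and integrates the first-term bound to obtain $\frac{(1+\zeta^2)^{n/2}}{\pi\,n!}\Gamma(\alpha n+1)(x-\zeta)^{-(\alpha n+1)}$, which is~\eqref{eq_error_estimate_at_infinity} after the same $\Gamma$-identity. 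The main obstacle is exactly this rotation of the remainder. Passing the modulus inside on the real axis is useless, since $\int_0^\infty t^{\alpha n}\,dt$ diverges, so the oscillation must be exploited; but on the imaginary axis the argument of the expanded exponential becomes $-(1-i\zeta)e^{i\pi\alpha/2}\tau^\alpha$, whose real part carries the sign of $\cos(\tfrac{\pi\alpha}2-\arctan\zeta)$. The clean first-term bound is therefore immediate only when this cosine is nonnegative; when it is negative (e.g.\ $\alpha$ near $1$ with $|\beta|$ near $1$) I would instead deform only as far as the critical angle $(\tfrac\pi2+\arctan\zeta)/\alpha$, at which the real part vanishes, or appeal to the analyticity and subexponential decay of the integrand directly via~\cite{zolotarev_1986}. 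I expect this sign bookkeeping, and---for $\alpha>1$, where the series is only asymptotic---the admissibility of the contour deformation itself, to be the most delicate part of the argument.
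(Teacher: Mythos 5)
Your derivation is in substance the paper's own: the paper likewise starts from $\frac{1}{\pi}\Re\int_0^\infty e^{it(x-\zeta)}e^{-(1-i\zeta)t^\alpha}\,dt$, rotates the contour to the positive imaginary axis by appeal to Lemma~2.2.2 of Zolotarev, truncates the power series of the non-oscillatory exponential, and bounds the remainder ``in a similar manner'' to Lemma~\ref{lem_series_0}. The only differences are cosmetic: the paper rotates once and then expands in the rotated variable (routing the bookkeeping through the $\vct{A}$-parameterization at argument $x^{-1/\alpha}$), whereas you expand on the real axis and rotate each moment integral and the remainder separately; your phase computation and the identity $\Gamma(\alpha k+1)/\Gamma(k+1)=\alpha\,\Gamma(\alpha k)/\Gamma(k)$ reproduce the summand of \eqref{eq_series_at_infinity} exactly.

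The delicate point you flag is a genuine gap --- and it is equally a gap in the paper's proof, where it is hidden by a typo. After the rotation, the exponential being expanded has argument $w=-(1-i\zeta)e^{i\pi\alpha/2}xu^\alpha$ (the factor $e^{i\pi\alpha/2}$ comes from $(iu)^\alpha$; the paper's displayed $R_n^\infty$ drops it, but it must be present, since otherwise the termwise integrals would produce $\sin(k\arctan\zeta)$ rather than $\sin([\pi\alpha/2-\arctan\zeta]k)$). The first-omitted-term bound $|e^w-\sum_{k<n}w^k/k!|\le |w|^n/n!$ requires $\Re w\le 0$, which is exactly your condition $\cos(\pi\alpha/2-\arctan\zeta)\ge 0$; with the typo the real part looks like $-xu^\alpha\le 0$ and the issue is invisible. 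Moreover, no amount of sign bookkeeping can rescue the stated constant in the bad regime, because \eqref{eq_error_estimate_at_infinity} is then false as a uniform-in-$x$ statement. Take $n=1$, $\beta=1$, $\alpha\to 1^-$ (so $\zeta=-\tan(\pi\alpha/2)\to-\infty$ and $\pi\alpha/2-\arctan\zeta=\pi\alpha$, with negative cosine), and fix $x$ near the mode: by continuity of the $\vct{M}$-parameterization the left-hand side tends to the positive mode height of the $\alpha=1$, $\beta=1$ density, while the right-hand side is
\begin{equation*}
\frac{\Gamma(\alpha+1)}{\pi}\,
\frac{\bigl(\cos(\pi\alpha/2)\bigr)^{\alpha}}{\bigl(\sin(\pi\alpha/2)\bigr)^{\alpha+1}}\,\bigl(1+o(1)\bigr)\;\longrightarrow\;0 .
\end{equation*}
Consistently with this, your fallback deformation to the critical angle $\theta^\ast=(\pi/2+\arctan\zeta)/\alpha$ yields the correct power $|x-\zeta|^{-\alpha n-1}$ but a strictly worse constant carrying $(\sin\theta^\ast)^{-\alpha n-1}$, which is the honest estimate in that regime. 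Your second worry is also well founded: for $\alpha>1$ the factor $e^{-(1-i\zeta)z^\alpha}$ can grow superexponentially inside the sector, so the rotation itself (in your version and the paper's alike) is not justified there; Zolotarev treats the tail expansion for $\alpha>1$ via the duality law linking $\alpha$ and $1/\alpha$, which neither your sketch nor the paper invokes. In short: same approach as the paper, correct where the paper's argument is correct, and the two obstacles you identified are real --- they are defects of the paper's proof (and, in the first case, of the stated error bound itself on part of the parameter range), not of your reconstruction.
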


\begin{proof}
For simplicity, we first derive a series expansion for the $\vct{A}$-parameterization 
and convert it to the $\vct{M}$-parameterization via
the shift $x_A = x_M - \zeta$ afterwards.
To do this, we extend $\varphi_X$ to the complex plane, and
integrate along the contour $z = iu x^{-1/\alpha}$.
Again, this is justified by Lemma 2.2.2 in~\cite{zolotarev_1986}.
\begin{equation}
\label{eq_infder1}
\begin{aligned}
  x^{-1/\alpha} f_A(x^{-1/\alpha}; \alpha, \beta) & = \frac{1}{\pi x^{1/\alpha} } \Re \int_0^\infty e^{izx^{-1/\alpha} } \, 
  \varphi_X(z;\alpha,-\beta) \, dz \\
  	& = - \frac{1}{\pi} \Im \int_0^\infty e^{-u} \varphi(iux^{1/\alpha}; \alpha, -\beta) \, du \\
	& = \frac{\alpha}{\pi} \sum_{k=1}^{n-1} \frac{\Gamma(\alpha n)}{ \Gamma(n)} 
			(-1)^{k+1} (1+\zeta^2)^{k/2} \sin( [ \pi \alpha/2 - \arctan \zeta] k ) \, x^k + R^\infty_n.
\end{aligned}
\end{equation}
Here, the first $n$ terms of the power series of the characteristic function
were used to approximate the integral.
Therefore,
\begin{equation}
R^\infty_n := - \frac{1}{\pi} \Im  \int_0^\infty \left[ \exp( - (1-i\zeta) x u^\alpha) - 
							\sum_{k=0}^{n-1} \frac{ ( - [1-i\zeta] x u^\alpha )^k }{k!} \right] 
							e^{-u}\, du.
\end{equation}
The bound 
of $|R^\infty_n|$
is attained in a similar manner as the one for $R^0_n$ in Lemma \ref{lem_series_0}.
Rearranging the last line of \eqref{eq_infder1}, and substituting 
$x- \zeta$ for $x$ yields the series \eqref{eq_series_at_infinity}
 in the \mbox{$\vct{M}$-parameterization}.
\end{proof}

Expression~\eqref{eq_series_at_infinity}
converges to the density for $\alpha < 1$, 
and can be used as an asymptotic expansion for $\alpha > 1$, $\beta \neq -1$.
In the case $\alpha > 1$, $\beta = -1$, \eqref{eq_series_at_infinity}
is identically zero, while the density decreases to 
zero exponentially as $x \to \infty$~\cite{zolotarev_1986}.
As with the series above however, we can still guarantee absolute accuracy 
compared to the true density.
Indeed, as a consequence of the lemma, the series \eqref{eq_series_at_infinity} is accurate to precision~$\eps$ for any $x$ satisfying 
\begin{equation}
\begin{aligned}
  |x-\zeta| &\geq  \left[ \frac{\alpha}{\pi \eps} 
    (1+\zeta^2)^{\frac{n}{2} }
    \frac{ \Gamma(\alpha n)}{\Gamma( n) } 
  \right]^{1/(\alpha n-1)} \\
&:= B_{n-1}^\infty(\alpha, \beta).
\end{aligned}
\end{equation}
Notably, if for some $\alpha, \beta$, we take $n_0(\alpha, \beta)$ terms 
of \eqref{eq_series_at_zero} and $n_\infty(\alpha, \beta)$ terms
of \eqref{eq_series_at_infinity}, then it only remains to show
that values of $x$ in the range
\begin{equation}
B_{n_0}^0 \leq x - \zeta \leq B_{n_\infty}^\infty
\end{equation}
can be evaluated efficiently.
We will elaborate on the details of our scheme
in Section~\ref{sec_algorithm}.

\subsection{Derivatives of stable densities}
\label{sec_derivatives}

The integral representation \eqref{eq_inverse_Fourier} admits
reasonably simple expressions for the Fourier transform of the
derivatives of the density with respect to $x$, $\alpha$, and $\beta$.
We will derive these expressions here.  For brevity, let
$h= h(t, x;\alpha,\beta)$, $\zeta = \zeta(\alpha, \beta)$, and
$\partial_x = \partial/\partial x$. Similarly for differentiation with
respect to $\alpha$ and $\beta$.
First, note that for $\alpha \neq 1$,
\begin{equation}
\begin{aligned}
\partial_\alpha \zeta &= - \frac{\pi}{2} \beta \left[ \left( \tan \frac{\pi \alpha }{2} \right)^2 + 1 \right], \\
\partial_\beta \zeta &= - \tan \frac{\pi \alpha}{2},
\end{aligned}
\end{equation}
and
\begin{equation}
\begin{aligned}
\label{eq_hder}
\partial_x h &= t, \\
\partial_\alpha h &= (t^\alpha - t) \partial_\alpha \zeta + t^\alpha \log(t) \zeta, \\
\partial_\beta h &= (t^\alpha - t) \partial_\beta \zeta.
\end{aligned}
\end{equation}
  In order to obtain expressions for $\partial_\alpha h$ and $\partial_\beta h$ 
  at $\alpha = 1$, we compute the limit as $\alpha \to 1$ of the 
  corresponding expressions in~\eqref{eq_hder}:
\begin{equation}
\begin{aligned}
\label{eq_hder_a1}
\lim_{\alpha\to 1}\partial_\alpha h &= \frac{\beta}{\pi} t \log^2 t, \\
\lim_{\alpha\to 1}\partial_\beta h &= \frac{2}{\pi} t \log t .
\end{aligned}
\end{equation}
Since $h$ is continuous in all parameters at $\alpha = 1$
and both one-sided limits exist, the values of $\partial_\alpha f$ 
and $\partial_\beta f$ are well-defined when $\alpha=1$.
Finally, we have
\begin{equation}
\begin{aligned}
\label{eq_partials}
\partial_x f(x; \alpha, \beta) &= -\frac{1}{\pi} \int_0^\infty t \, \sin h
\, e^{-t^\alpha} \, dt, \\
\partial_\alpha f(x; \alpha, \beta) &= -\frac{1}{\pi} \int_0^\infty 
\left(  \sin h \, \partial_\alpha h +  t^\alpha \cos h \,  \log t \right) 
e^{-t^\alpha} \, dt,  \\
\partial_\beta f(x; \alpha, \beta) &= -\frac{1}{\pi} \int_0^\infty
\sin h \, \partial_\beta h \, e^{-t^\alpha} \, dt.
\end{aligned}
\end{equation}
The partial derivatives in~\eqref{eq_partials}
have a relatively compact form.  In contrast,
the partial derivatives with respect to~$\alpha$ and~$\beta$ of the
stationary phase integral~\eqref{eq_statphas} and the series
expansions~\eqref{eq_series_at_zero} and~\eqref{eq_series_at_infinity}
become rather unwieldy.  Nevertheless, $\partial_\alpha$ of the
stationary phase integral and series representation of $f$ was
computed in~\cite{matsui_2004} (for the symmetric case).
However,
this approach becomes cumbersome in the general case, as numerous
applications of the product and chain rule make the expressions
impractically long.

In order to compute the derivatives of the series
representations~\eqref{eq_series_at_zero} 
and~\eqref{eq_series_at_infinity} with
respect to $x$, differentiation can be done term-by-term. See
Appendix~\ref{app_grad} for this calculation.

\section{Generalized Gaussian quadrature}
\label{sec_gengauss}

In this section we briefly discuss what are known as {\em generalized 
 Gaussian quadrature rules}. These integration schemes are analogous
to the Gaussian quadrature rules for orthogonal
polynomials, except that they are applicable to wide classes of
functions, not merely polynomials. See~\cite{dahlquist_2003} 
for a description of
classical Gaussian quadrature with regard to polynomial integration.
Generalized Gaussian quadrature
schemes were first rigorously introduced
in~\cite{ma_1996,yarvin_1998}. Recently, a more efficient scheme for their
construction was developed
in~\cite{bremer_2010b}. It is this more recent algorithm that we base
our calculations on, and outline the main ideas here. See
both of these references for a detailed description of these
quadrature rules.

\subsection{Gaussian quadrature}

A $k$-point quadrature rule consists of a set of $k$ nodes and 
weights, which we will denote by $\{x_j,w_j\}$. These nodes and
weights are chosen to accurately approximate the integral of a
function $f$ with respect to a positive weight function $\omega$:
\begin{equation}
\int_a^b f(x) \, \omega(x) \, dx \approx \sum_{j=1}^k w_j \, f(x_j).
\end{equation}
Many different types of quadrature rules exist which exhibit different
behaviors for different classes of functions $f$. 
In short, if a $k$-point quadrature rule exists which integrates $k$
linearly independent functions $f_1$, \ldots, $f_{k}$ we say that
the quadrature rule is a \emph{Chebyshev quadrature}. If the $k$-point
rule is able to integrate $2k$ functions $f_1$, \ldots, $f_{2k}$ then
we say that the rule is \emph{Gaussian}.

In the case where the $f_\ell$ are polynomials, the nodes and weights
of the associate Gaussian quadrature can be determined from the
class of orthogonal polynomials with corresponding weight 
function~$\omega$. However, in the case where the $f_\ell$'s are arbitrary
square-integrable 
functions, these nodes and weights must be determined in a purely
numerical manner.

\subsection{Nodes and weight by nonlinear optimization}

We now provide an overview of the numerical 
procedure for constructing a Gaussian quadrature rule for the
integrand in equation~\eqref{eq_inverse_Fourier} using the procedure
of~\cite{bremer_2010b}.
Recall, we are constructing a quadrature rule to compute:
\begin{equation}\label{eq_ggquad}
f(x;\alpha,\beta) = \frac{1}{\pi} \int_0^\infty  \cos(
h(t;x,\alpha,\beta) ) \, e^{-t^\alpha} \, dt,
\end{equation}
i.e., the goal is to compute integrals of the functions we will
denote by
\begin{equation}
\phi(t;x,\alpha,\beta) = \cos(h(t;x,\alpha,\beta)) \, e^{-t^\alpha}.
\end{equation}
For reasons of clarity,
we address computing a generalized Gaussian quadrature
scheme for a
class of functions $\psi = \psi(t;\eta)$, i.e. those that depend on
only one parameter, $\eta$. The multi-parameter case is analogous. The
following discussion is cursory, and we direct the reader
to~\cite{bremer_2010b} for more details, as there are several aspects
of numerical analysis, optimization, and linear algebra that would
merely distract from the current application.

For a selection of $2n$ linearly-independent functions $\psi_\ell$, we note
that the corresponding $n$-point 
generalized Gaussian quadrature $\{t_j,w_j\}$ is the solution to the
following system of $2n$ non-linear equations:
\begin{equation}
  \begin{aligned}
    \sum_{j=1}^n w_j \, \psi_1(t_j) &= \int \psi_1(t) \, dt, \\
    \vdots \qquad &= \qquad \vdots \\
    \sum_{j=1}^n w_j \, \psi_{2n}(t_j) &= \int \psi_{2n}(t) \, dt.
  \end{aligned}
\end{equation}
Obtaining a solution to this system is the goal of the following
procedure.

The scheme proceeds by first 
finding an orthonormal set of functions $u_\ell$ such that
any $\psi(\cdot,\eta)$ can be approximated, to some specified
precision~$\epsilon$, as a 
linear combination of the~$u_\ell$ for any~$\eta$.
Next, an oversampled quadrature scheme that
integrates \emph{products of these functions} is constructed, by
using, for example, adaptive Gaussian
quadrature~\cite{press_2007}. Adaptive Gaussian quadrature proceeds by
dividing the interval of integration into several segments such that
on each segment, the integral is computed to a specified accuracy. The
accuracy of each quadrature is determined by comparing with the value
obtained on a finer subdivision of the interval.

For $2n$ functions, this means that we have a $m$-point quadrature rule
$\{t_j,w_j \}$ such that
\begin{equation}
\left| \int u_k(t) \, u_\ell(t) \, dt - \sum_{j=1}^m w_j \, 
u_k(t_j) \, u_\ell(t_j) \right| \leq \epsilon,
\end{equation}
for all $1\leq k,\ell\leq 2n$, with $m\geq 2n$.
Accurately integrating products of the $u_\ell$'s allows for stable
interpolation to be done for any $\eta \neq \eta_\ell$~\cite{bremer_2010b}.
At this point, the vectors $\vct{u}_\ell \in \mathbb R^m$ serve as finite
dimensional embeddings of the square-integrable functions $u_\ell$:
\begin{equation}
  \vct{u}_\ell = 
  \begin{bmatrix}
    \sqrt{w_1} \, u_\ell(t_1) \\
    \vdots \\
    \sqrt{w_m} \, u_\ell(t_m)
  \end{bmatrix}.
\end{equation}
Here, the $u_\ell(t_j)$'s are scaled so that
$\vct{u}_\ell^T \vct{u}_\ell \approx ||u_\ell||^2_2$.
Computing a rank-revealing $\vct{Q}\vct{R}$ decomposition of the
matrix $\vct{U}$,
\begin{equation}\label{eq_umat}
  \mtx{U} = 
  \begin{bmatrix}
    \vct{u}^T_1 \\
    \vdots \\
    \vct{u}^T_m
  \end{bmatrix},
\end{equation}
allows for the immediate construction of a $2n$-point Chebyshev 
quadrature rule. This procedure, equivalently, has selected $2n$ values
of $t_j$ that can serve as integration (and interpolation) nodes for
all of the $u_\ell$'s. Refining this $2n$-point Chebyshev quadrature
down to an $n$-point quadrature proceeds via a Gauss-Newton
optimization. On each step, a single node-weight pair $(t_i,w_i)$ is
chosen to be discarded and the remaining nodes and weights are
optimized.
The procedure proceeds until roughly $n$ nodes remain, or accuracy in
the resulting quadrature starts to suffer.
While the weights we obtained as a result of this optimization
procedure happened to be positive, no explicit effort was made to
ensure this. Theoretical considerations for the existence of positive
weights can be found in~\cite{yarvin_1998}.

\begin{remark}
  Note that in the symmetric case, we must obtain a selection of
  $x_{\ell_1}$, $\alpha_{\ell_2}$ that yield a (possibly redundant)
  basis for all
  $\phi$. This could be done via adaptive discretization in these
  variables, but in practice, we merely sample $x$, $\alpha$ at
  Chebyshev points in Region~I in Figure~\ref{fig_symmetric}. The
  parameter $\alpha$ is sampled at roughly $100$ Chebyshev points in
  $[0.5,2.0]$, and then for each of these values $\alpha_{\ell_2}$,
  $x$ is sampled at roughly $100$ Chebyshev point in the interval
  $[0,B^\infty_{40}(\alpha_{\ell_2})]$. This yields an initial set of
  10,000 functions which are then compressed and integrated.  In order
  to make sure this sampling in $x$ and $\alpha$ provides a suitable
  set of functions to span the space of all $\phi$, we rigorously test
  the quadrature at many thousands of random locations in Region~I,
  comparing against adaptive quadrature.  The asymmetric case is
  analogous, with equispaced sampling in $x$, $\alpha$, and
  $\beta$. Experimentally, in order to obtain high accuracy in the
  resulting quadrature, more nodes are required in the sampling of $x$
  than in $\alpha$ or $\beta$.
\end{remark}





\subsection{A rank-reducing transformation}
\label{sec_rank}

In the brief description of the above algorithm, we assumed that the
interval of integration for the quadrature was finite. In our case,
the interval in~\eqref{eq_ggquad} is infinite, but can be truncated
given that it decays quickly due to the term $e^{-t^\alpha}$.  A
common interval of integration for all $\phi(\cdot; x,\alpha)$ can be
obtained based on the decay of $e^{-t^\alpha}$ for the smallest
$\alpha$ under consideration. In fact, for a particular
precision~$\epsilon$, we can set the upper limit of integration to
be~$T_\alpha = (-\log\epsilon)^{1/\alpha}$. Using this limit, we can
redefine each integrand $\phi$ under a linear transformation:
\begin{equation}\label{eq_changevar}
  \begin{aligned}
    f(x;\alpha,\beta) &\approx \frac{1}{\pi} \int_0^{T_\alpha} 
    \phi(t;\alpha,\beta) \, dt \\
    &= \frac{T_\alpha}{\pi} \int_0^1 \phi(\tau T_\alpha;\alpha,\beta) \,d\tau
    \\
    &= \frac{T_\alpha}{\pi} 
    \int_0^1 \cos(h(\tau T_\alpha ; \alpha, \beta)) \, 
    e^{-(\tau T_\alpha)^\alpha} \, d\tau\\
    &= \frac{T_\alpha}{\pi} 
    \int_0^1 \tilde\phi(\tau ; \alpha, \beta) \,d\tau.
  \end{aligned}
\end{equation}
Computing generalized Gaussian quadratures for the functions
$\tilde\phi$ turns out to be much more efficient due to the
similarity of numerical support (i.e. those $t$ such that
$|\tilde\phi(t)| > \epsilon$). This change of variables can
significantly  reduce the rank obtained in  the rank-revealing
$\mtx{QR}$ step of the previous non-linear optimization procedure.
For example, the generalized Gaussian quadrature for functions in
Region~I in Figure~\ref{fig_symmetric} consisted of 100
nodes/weights
before the change of variables, and only 43 nodes/weights afterward.
The resulting quadrature can be applied to the original function
$\phi$ via a straightforward linear transformation of the nodes and
scaling of the weights.

To be fair, the stationary phase integral~\eqref{eq_statphas} too,
permits such a rank-reducing transformation. However, it turns out to
be much less efficient because it relies on an \emph{a priori}
zero-finding procedure. Notably,~\eqref{eq_statphas} has changing
numerical support for different choices of the input parameters $x$,
$\alpha$, and $\beta$.  This is true even after accounting for the
parameter-dependent interval of integration by composing the
integrand with a linear map from $[0,1]$ to $[-\theta_0, \pi/2]$.
Experimentally, even though the integrand decays to zero at
both $-\theta_0$ and $\pi/2$, the differing numerical support is
primarily caused by the exponential behavior of the integrand on the
side of the interval of integration where $e^{-g(\theta)} \to 0$.
Making use of this observation, for $\alpha \leq 1$ 
solving for $\theta_\eps$ such that
$g(\theta_\eps) e^{-g(\theta_\eps)} < \eps$ allows the integral
  in~\eqref{eq_statphas}  to be approximated as:
\begin{equation}
f(x; \alpha,\beta) = \frac{\alpha}{\pi |\alpha - 1| } 
\frac{1}{(x - \zeta)} \int_{\theta_\eps}^{\pi/2}
g(\theta;x,\alpha,\beta) \, 
e^{-g(\theta;x,\alpha,\beta) } \, d\theta,
\end{equation}
and for $\alpha > 1$:
\begin{equation}
f(x; \alpha,\beta) = \frac{\alpha}{\pi |\alpha - 1| } 
\frac{1}{(x - \zeta)} \int_{-\theta_0}^{\theta_\eps}
g(\theta;x,\alpha,\beta) \, 
e^{-g(\theta;x,\alpha,\beta) } \, d\theta.
\end{equation}
A change of variable in these integrals can translate the interval of
integration to $[0,1]$. If the generalized Gaussian quadrature
construction procedure is applied to these formulae, we also observe a
reduction in the number of nodes and weights required. For example, in
the symmetric case, solving for $g(\theta_\eps) = 40$ (which yields
double-precision decay) the rank of the matrix~$\mtx{U}$
in~\eqref{eq_umat} decreased from 290 to 68 for $x \in [10^{-5},30]$
and $\alpha \in [.6,.8]$

In practice, the transformation of the stationary phase integral does
not prove to be efficient because it relies on an initial zero-finding
procedure to construct transformation in $\theta$ depending on 
each parameter $x$, $\alpha$, $\beta$.
In contrast, the change of variables in~\eqref{eq_changevar} merely
requires evaluating $\log$.
Similar changes of variables can be used to simplify the construction
of quadratures for evaluating gradients of $f$.

\subsection{Alternatives to generalized Gaussian quadrature}
\label{sec_altggquad}

A number of quadrature
techniques which are particularly effective for highly oscillatory 
integrands have been developed
relatively recently. See~\cite{iserles_2006, olver_2008, olver_2010}
for an informative overview of such methods. 
Two notable examples which we will discuss here are
Filon-type and Levin-type methods.
These techniques are applicable to integrals of the form
\begin{equation}
\label{eq_osci_form}
\int f(t) \, e^{i \omega g(t)} \, dt,
\end{equation} 
where $f$ and $g$ are smooth, non-oscillatory 
functions, and $\omega$ is a scalar. The function~$g$ is 
called the {\it oscillator}.

The integrand of~\eqref{eq_inverse_Fourier} is oscillatory, 
and therefore one could consider applying either Filon- or Levin-type
quadrature schemes 
 instead of the generalized Gaussian quadrature rules.
 Unfortunately, for general parameters ranges,
 generalized Gaussian quadratures are likely to be the most efficient
 schemes. We briefly justify this statement with a discussion of Filon
 and Levin methods.

Filon-type quadratures are interpolatory quadrature rules.
That is, they approximate the function $f$ with 
a set of functions $\psi_k$ for which an analytical solution of the integral
\begin{equation}
\label{eq_filon}
\mu_k = \int \psi_k(t) \, e^{i\omega g(t)} \, dt
\end{equation}
exists.
This poses a problem for the application of Filon-type quadratures to
the integral~\eqref{eq_inverse_Fourier}.
Indeed, note that~\eqref{eq_inverse_Fourier}
can be written in the form of~\eqref{eq_osci_form} 
by setting
\begin{equation}
  \begin{aligned}
    f(t) &= e^{-t^\alpha}, \\
    g(t) &= (x-\zeta) t + \zeta t^\alpha.
  \end{aligned}
\end{equation}
Since~$g$ depends on~$\alpha$, $\beta$, and $x$, 
the integrals~$\mu_k$ have to be recalculated for every evaluation of
a stable density with differing parameters.
In contrast, the generalized Gaussian quadratures we derived
are applicable for a wide range of parameters
(see Figure~\ref{fig_regions}).

Turning to Levin-type methods, they can be
illustrated with the
following observation~\cite{olver_2010}.  Let $u(t)$ be a function
which satisfies
\begin{equation}
\label{eq_levin_ansatz}
\frac{d}{dt} \left( u(t) \, e^{i\omega g(t)} \right) = f(t) \, e^{i\omega g(t)} .
\end{equation}
We then have
\begin{equation}
\label{eq_levin_int}
\int_a^b f(t) \, e^{i\omega g(t)} dt = u(b) \, e^{i\omega g(b)}
- u(a) \, e^{i\omega g(a)}.
\end{equation}
From~\eqref{eq_levin_ansatz}, we can also
derive the differential equation
\begin{equation}
\label{eq_levind}
u' + i \omega \, g' \, u = f,
\end{equation}
where $u'$ and $g'$ denote differentiation with respect to $t$.
The problem of computing  an oscillatory integral has therefore been
converted to that of solving a first-order, linear differential
equation on the interval $[a,b]$.

Levin-type methods will require the solution of this ODE \emph{every
  time} an integral has to be evaluated. Even with high-order
convergent ODE solvers, these methods are unlikely to beat generalized
Gaussian quadrature methods in terms of floating-point operations
(after suitable offline pre-computations).
Levin-type methods could be applicable to the parameter range
$.9 < \alpha < 1.1$, $\beta \neq 0$, where modestly-sized
generalized Gaussian quadratures are not available.
 For example, if a Chebyshev spectral method is used to
solve~\eqref{eq_levind}, 
numerical experiments indicate that the 
condition number of the system can reach~$\sim 10^6$ before
the solution $u$ can be fully resolved (requiring $\sim 1000$
Chebyshev terms).
Therefore, a significant loss of accuracy with the Chebyshev spectral 
method is likely.
Indeed, sometimes only 9 significant digits are achieved with this approach.
A more specialized solver would have to be developed to 
solve~\eqref{eq_levind} efficiently and with higher accuracy.

Levin-type methods could also be used for integral representations
of the partial derivatives of stable densities,
for which no asymptotic expansion is available.
However, it is likely to be cheaper to form a Chebyshev interpolant
of the density as a function of the parameter, and then differentiate
the series (i.e. perform 2D interpolation and spectral
differentiation). This will achieve higher accuracy than a 
finite difference scheme, with a slightly higher computational cost.

\section{Algorithm \& Numerical examples}
\label{sec_algorithm}

In the following, we will describe the details of our algorithm.  In
particular, we detail which formula or quadrature should be used
depending on values of the parameters $x$, $\alpha$, $\beta$.  We
begin with a comparison of the benefits of the two integral
representations given by~\eqref{eq_inverse_Fourier}
and~\eqref{eq_statphas}.

\subsection{Choosing an integral representation}
It has become clear after several numerical experiments that the
stationary phase integral~\eqref{eq_statphas}, while seemingly simpler
to evaluate than~\eqref{eq_inverse_Fourier}, carries several
disadvantages.  Namely, it cannot be used to reliably evaluate~$f$
when~$x \sim \zeta$ (the mode of the distributions in the symmetric
case),~$\alpha \sim 1$, or for very large $x$. Furthermore, its
partial derivatives suffer from the same deficiencies and have rather
unwieldy forms.  Lastly, the rank-reducing technique of
Section~\ref{sec_rank} is not as effective when applied 
to~\eqref{eq_statphas}. This results in quadratures of much larger
sizes when compared to those for~\eqref{eq_inverse_Fourier}.

In contrast,~\eqref{eq_inverse_Fourier} has only one of the
aforementioned deficiencies: it cannot be evaluated efficiently when
$\alpha \sim 1$ in the asymmetric case.  Still, it is important to
point out that~\eqref{eq_inverse_Fourier} can be easily evaluated at
$\alpha \sim 1$ in the symmetric case.

The stationary~phase form~\eqref{eq_statphas} does have two advantages
over~\eqref{eq_inverse_Fourier}.  First, it is well behaved for
intermediate to large $x$, whereas~\eqref{eq_inverse_Fourier} becomes
very oscillatory.  Second, it can be evaluated for $\alpha < 0.5$,
whereas the relevant interval of integration
of~\eqref{eq_inverse_Fourier} grows rather fast as $\alpha \to 0$.
However, the series expansion~\eqref{eq_series_at_infinity} is a much
more efficient means of evaluation in these regimes.  This limits the
usefulness of the stationary phase integral for our purpose.

As a result, the only integral representation of the density
 we use in our algorithm
is given by~\eqref{eq_inverse_Fourier}.  One consequence of this
choice is that we do not need to use the series expansion around
$x = \zeta$ given in~\eqref{eq_series_at_zero}, as the integral is
well behaved there.
For similar reasons, we use~\eqref{eq_cdf}
to compute $F$, and the integral
representations for the gradient of $f$
given in~\eqref{eq_partials}.

\begin{figure}[!t]
  \centering
  \begin{subfigure}{.45\linewidth}
    \centering
    \includegraphics[width=.95\linewidth]{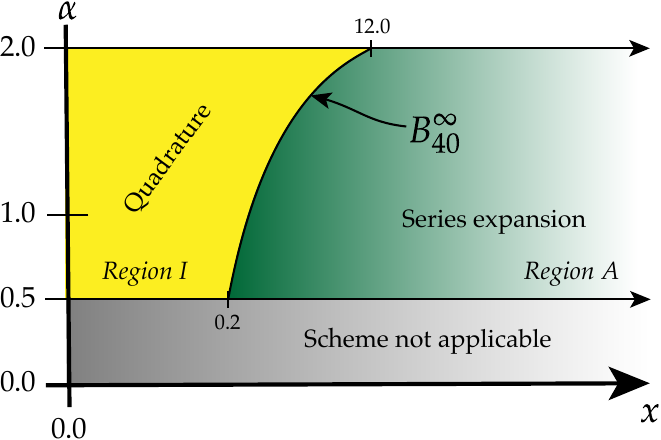}
    \caption{The symmetric case, $\beta=0$.}
    \label{fig_symmetric}
  \end{subfigure}
  \qquad
  \begin{subfigure}{.45\linewidth}
    \centering
    \includegraphics[width=.95\linewidth]{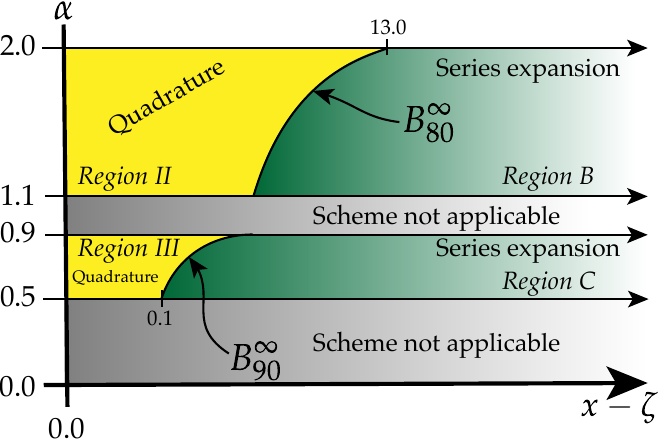}
    \caption{The asymmetric case, $\beta\neq 0$.}
    \label{fig_asymmetric}
  \end{subfigure}
  \caption{Regions of validity for generalized Gaussian quadrature
    rules and series approximations for the evaluation of
    $f(x;\alpha,\beta)$. Asymptotic expansions are used for
  extreme values of $x$ and generalized Gaussian quadrature
  routines are able to fill in large regions of the remaining space.}
  \label{fig_regions}
\end{figure}

\subsection{The symmetric case $\beta = 0$}
\label{sec_symmetric}

We first provide some numerical examples of the accuracy and
efficiency of evaluating the symmetric densities
\begin{equation}\label{eq_simplesymmetric}
f(x;\alpha,0) = \frac{1}{\pi} \int_0^\infty  \cos (xt)
\, e^{-t^\alpha} \, dt.
\end{equation}
We restrict our attention to values of $f$ for $\alpha \geq 0.5$ for
two reasons.  First, when $\alpha$ is much smaller than $0.5$, and $x$
is close to but not equal to $\zeta$, existing numerical schemes for
integral representations and series expansions require prohibitive
computational cost to achieve reasonable accuracy.  And second, the
applications for modeling with stable laws with such small values of
the stability parameter $\alpha$ seem to be very rare.  Nevertheless,
it should be pointed out that when $\alpha < 0.1$, the
series~\eqref{eq_series_at_infinity} with $n_\infty = 128$ terms is
accurate to double precision for $x - \zeta \geq 10^{-16}$.  At
$x = \zeta$, the first term of the series~\eqref{eq_series_at_zero}
can be used to obtain an accurate value of the density.  Therefore, in
this extreme regime, effective numerical evaluations of stable laws is
possible using the series expansions alone.

\begin{table}[!b]
  \begin{center}
    \caption{Symmetric ($\beta=0$) stable density evaluation for
      $\alpha \in [.5, 2.0]$.}
    \label{tab_symmetric}
      \begin{tabular}{|c|c|c|c|c|}
        \hline
        & $n_{GGQ}$  & $n_{\infty}$ & $\max$ err  \\ \hline
        $f$ & $43$ & $42$ & $5 \se{-14}$ \\ \hline
        $\partial_x f$ & $44$ & $42$ & $9 \se{-14}$ \\ \hline
        $\partial_\alpha f$ & $49$ & $42$ & $1 \se{-13}$ \\ \hline
        $F$ & $39$ & $42$ & $9 \se{-14}$ \\ \hline
      \end{tabular}
  \end{center}
\end{table}

We now move to a description of our evaluation scheme.
For a particular value of $\alpha \geq 0.5$, if \mbox{$x \leq B_{40}^\infty$}
(this corresponds to Region~I in Figure~\ref{fig_symmetric}), we use a
43-point generalized Gaussian quadrature to evaluate the above 
integral. If $x > B_{40}^\infty$, we use series 
expansion~\eqref{eq_series_at_infinity}. 
The number of terms in the
series expansion was chosen (experimentally, as a precomputation)
to roughly equal the number of nodes in the optimized quadrature.
A similar method is used for the computation 
of $F$ and the gradient of $f$.
However, there is a notable difference in the computation 
of~$\partial_\alpha f$ as it does not permit a convenient series expression,
as noted in Section~\ref{sec_derivatives}. 
Instead, we use a finite difference scheme applied to
the series~$\eqref{eq_series_at_infinity}$  to 
compute~$\partial_\alpha f$ when~$x > B_{43}^\infty$.
The accuracy of finite difference schemes depends on the particular
 scheme used.
In practice, a two-point finite difference is accurate to 
about $10^{-6}$, while
 a fourth order scheme is accurate to about~$10^{-10}$.
The fourth order scheme is listed in the Appendix~\ref{app_grad}.

Accuracy results for $f$ and its gradient
are reported in Table~\ref{tab_symmetric}. The columns are as follows:
\begin{quote}
  $n_{GGQ}$: \tabto{.75in} the number of nodes in the generalized Gaussian
  quadrature scheme,\\ 
  $n_{\infty}$: \tabto{.75in} the number of terms used for the 
  series~\eqref{eq_series_at_infinity}, and\\
  $\max$ error: \tabto{.75in} the maximum absolute $L_\infty$ error relative to
  adaptive integration.
\end{quote}

The accuracy results were
obtained by testing our quadrature scheme
against an adaptive integration evaluate of~\eqref{eq_inverse_Fourier}
at 100,000
randomly chosen points in the $x\alpha$-plane for $x\in
[0, B_{n_\infty}^\infty]$ and $\alpha \in [0.5,2.0]$.
We should note that all results are reported in \emph{absolute}
precision. When evaluating integrals with arbitrarily
sign-changing integrands via quadrature, 
if the integral is of size $\delta$ then it is likely that $\mathcal
O(|\log\delta|)$ digits will be lost in \emph{relative precision} due to
the cancellation effect inherent in floating-point arithmetic.
Table~\ref{tab_quads} contains the 43-point quadrature for evaluating
stable densities in Region~I of Figure~\ref{fig_symmetric}.

  \begin{table}
    \begin{center}
      \caption{Nodes and weights for computing the integral
        in~\eqref{eq_simplesymmetric} in Region~I of
        Figure~\ref{fig_symmetric}. Note that the change of variables
        discussed in Section~\ref{sec_rank} must be used before applying
        the quadrature.}
      \label{tab_quads}
      \begin{tabular}{|c|c|c|} \hline
        $j$ & $t_j$  & $w_j$ \\ \hline
$ 1 $ & $ 3.8153503841778930\se{-08} $ & $ 1.9462166165433782\se{-07} $ \\ \hline 
$ 2 $ & $ 1.8621751229398742\se{-06} $ & $ 5.6557228645853394\se{-06} $ \\ \hline 
$ 3 $ & $ 2.3548989111566051\se{-05} $ & $ 5.0123980914007912\se{-05} $ \\ \hline 
$ 4 $ & $ 1.4796873542253231\se{-04} $ & $ 2.3484191896467563\se{-04} $ \\ \hline 
$ 5 $ & $ 5.9719633529811916\se{-04} $ & $ 7.3189687338231666\se{-04} $ \\ \hline 
$ 6 $ & $ 1.7776065804175705\se{-03} $ & $ 1.7238717892356147\se{-03} $ \\ \hline 
$ 7 $ & $ 4.2473152693930051\se{-03} $ & $ 3.3181618633886167\se{-03} $ \\ \hline 
$ 8 $ & $ 8.6062904061371317\se{-03} $ & $ 5.4843557934027244\se{-03} $ \\ \hline 
$ 9 $ & $ 1.5348863951004616\se{-02} $ & $ 8.0460517169448388\se{-03} $ \\ \hline 
$ 10 $ & $ 2.4742939762206897\se{-02} $ & $ 1.0741992568943348\se{-02} $ \\ \hline 
$ 11 $ & $ 3.6794136418563730\se{-02} $ & $ 1.3324899124821651\se{-02} $ \\ \hline 
$ 12 $ & $ 5.1299788260226145\se{-02} $ & $ 1.5632319416985788\se{-02} $ \\ \hline 
$ 13 $ & $ 6.7944092105184303\se{-02} $ & $ 1.7598001767457079\se{-02} $ \\ \hline 
$ 14 $ & $ 8.6382423526857308\se{-02} $ & $ 1.9224720756886148\se{-02} $ \\ \hline 
$ 15 $ & $ 1.0629323929619865\se{-01} $ & $ 2.0550906564542663\se{-02} $ \\ \hline 
$ 16 $ & $ 1.2740084223127754\se{-01} $ & $ 2.1626845166204386\se{-02} $ \\ \hline 
$ 17 $ & $ 1.4948000254495675\se{-01} $ & $ 2.2501767869303416\se{-02} $ \\ \hline 
$ 18 $ & $ 1.7235168105825832\se{-01} $ & $ 2.3218324218440851\se{-02} $ \\ \hline 
$ 19 $ & $ 1.9587547846015377\se{-01} $ & $ 2.3811106669646236\se{-02} $ \\ \hline 
$ 20 $ & $ 2.1994170091684220\se{-01} $ & $ 2.4307093498802106\se{-02} $ \\ \hline 
$ 21 $ & $ 2.4446430088367060\se{-01} $ & $ 2.4726814975746716\se{-02} $ \\ \hline 
$ 22 $ & $ 2.6937507294734536\se{-01} $ & $ 2.5085627984821550\se{-02} $ \\ \hline 
$ 23 $ & $ 2.9461905048621601\se{-01} $ & $ 2.5394814769833289\se{-02} $ \\ \hline 
$ 24 $ & $ 3.2015086713296453\se{-01} $ & $ 2.5662404915729992\se{-02} $ \\ \hline 
$ 25 $ & $ 3.4593177859400515\se{-01} $ & $ 2.5893662976614856\se{-02} $ \\ \hline 
$ 26 $ & $ 3.7192698736533930\se{-01} $ & $ 2.6091208338375568\se{-02} $ \\ \hline 
$ 27 $ & $ 3.9810287487972756\se{-01} $ & $ 2.6254650270947675\se{-02} $ \\ \hline 
$ 28 $ & $ 4.2442340488910107\se{-01} $ & $ 2.6379218475411595\se{-02} $ \\ \hline 
$ 29 $ & $ 4.5084450818929106\se{-01} $ & $ 2.6452938570694140\se{-02} $ \\ \hline 
$ 30 $ & $ 4.7730398807466573\se{-01} $ & $ 2.6449899946836126\se{-02} $ \\ \hline 
$ 31 $ & $ 5.0370157776242630\se{-01} $ & $ 2.6317157280117857\se{-02} $ \\ \hline 
$ 32 $ & $ 5.2986292621392794\se{-01} $ & $ 2.5956236923454400\se{-02} $ \\ \hline 
$ 33 $ & $ 5.5549151318191370\se{-01} $ & $ 2.5233135355892482\se{-02} $ \\ \hline 
$ 34 $ & $ 5.8023336057818919\se{-01} $ & $ 2.4253858975533026\se{-02} $ \\ \hline 
$ 35 $ & $ 6.0420106522936246\se{-01} $ & $ 2.3883208046443095\se{-02} $ \\ \hline 
$ 36 $ & $ 6.2845118361063135\se{-01} $ & $ 2.4788564380040439\se{-02} $ \\ \hline 
$ 37 $ & $ 6.5391666500166423\se{-01} $ & $ 2.6135655855085593\se{-02} $ \\ \hline 
$ 38 $ & $ 6.8067763680759019\se{-01} $ & $ 2.7386023987669407\se{-02} $ \\ \hline 
$ 39 $ & $ 7.0883363435562430\se{-01} $ & $ 2.9079584045104245\se{-02} $ \\ \hline 
$ 40 $ & $ 7.3935214962210505\se{-01} $ & $ 3.2403729259281477\se{-02} $ \\ \hline 
$ 41 $ & $ 7.7501382927296592\se{-01} $ & $ 3.9683359210637488\se{-02} $ \\ \hline 
$ 42 $ & $ 8.1983271443438077\se{-01} $ & $ 5.0313579393503942\se{-02} $ \\ \hline 
$ 43 $ & $ 8.7653187131388799\se{-01} $ & $ 6.3807406535572972\se{-02} $ \\ \hline 
      \end{tabular}
    \end{center}
  \end{table}

\begin{figure}[t]
  \begin{center}
    \begin{subfigure}{.45\linewidth}
      \begin{center}
        \includegraphics[width=.95\linewidth]{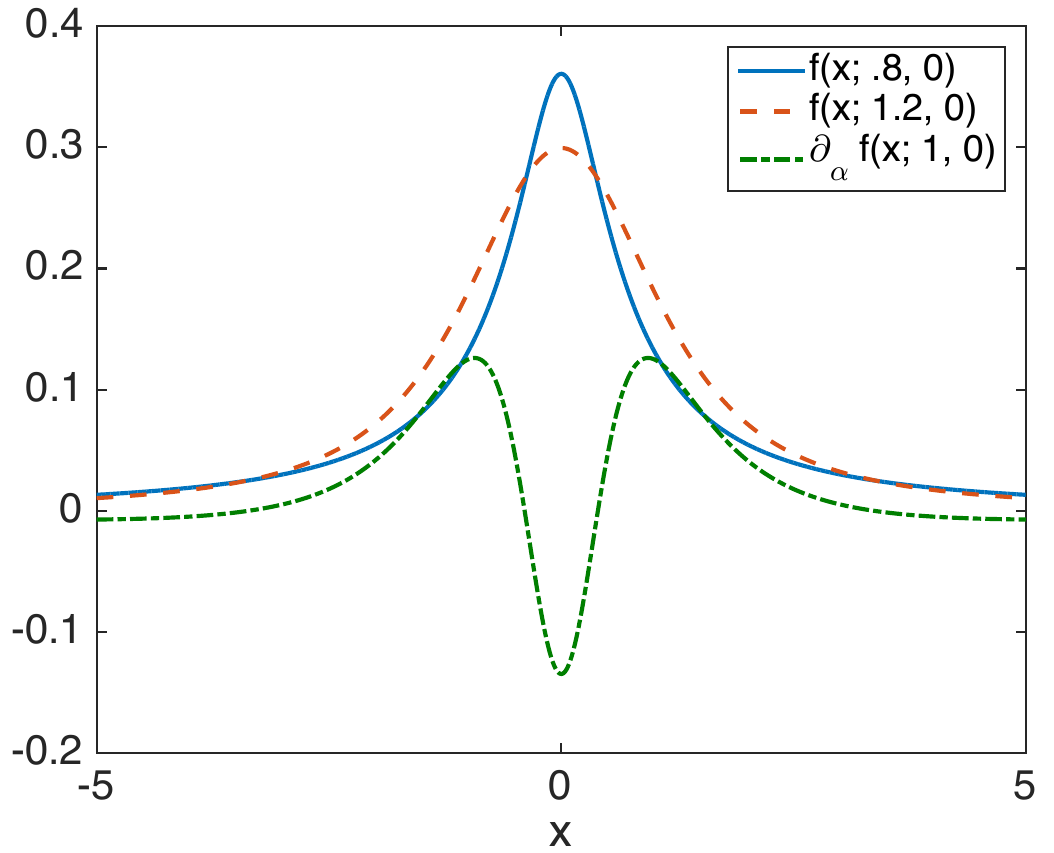} 
      \end{center}
      \caption{Varying $\alpha$.}
      \label{fig_alpha_effects}
    \end{subfigure}
    \quad
    \begin{subfigure}{.45\linewidth}
      \begin{center}
        \includegraphics[width=.95\linewidth]{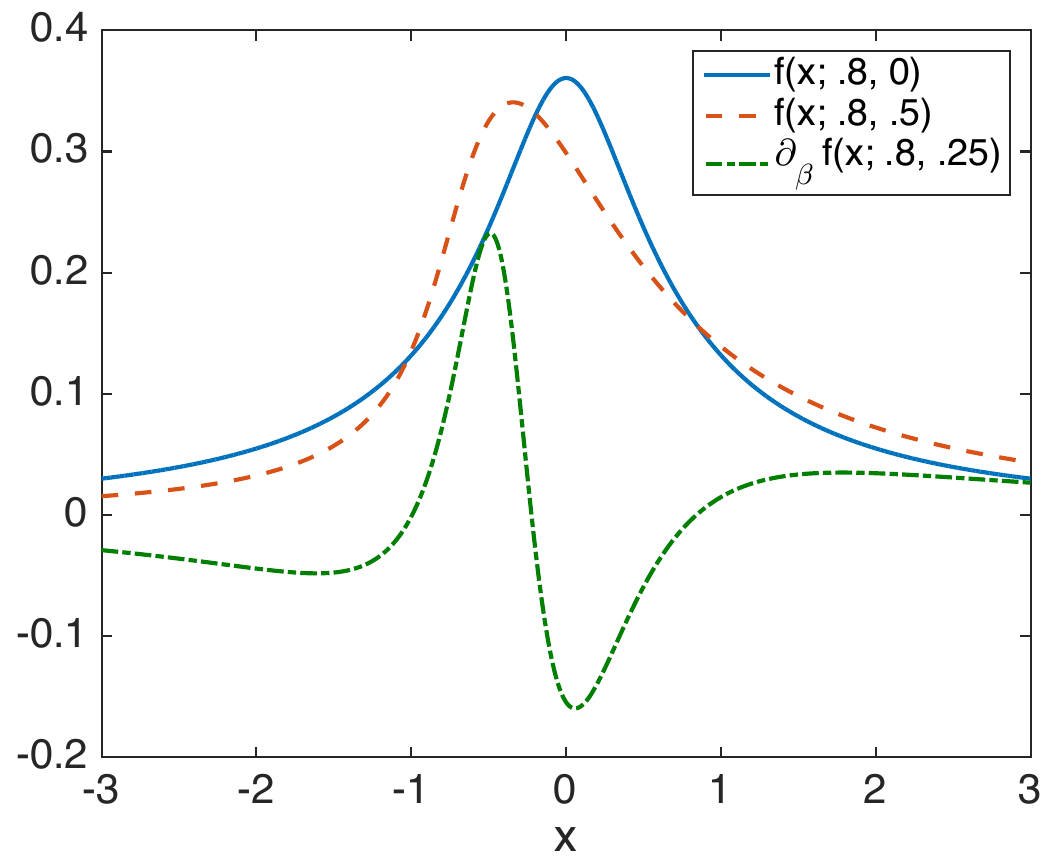} 
      \end{center}
      \caption{Varying $\beta$.}
      \label{fig_beta_effects}
    \end{subfigure}
  \end{center}
  \caption{Effect of changing parameters of the density 
    $f(x;\alpha,\beta)$, along with partial derivatives.}
\end{figure}

We should note that while it is possible to construct more efficient
quadratures for smaller regions of the $x\alpha$-plane, namely for
$\alpha>1$ (distributions with finite expectation), it is useful to
obtain a single global quadrature value in a lone region. As shown in
Figure~\ref{fig_alpha_effects}, small changes in $\alpha$ induce
equivalently small changes in the density (a rather low-rank update in
Fourier-space).  In particular applications with restricted stability
parameters, it may be prudent to construct even more efficient
quadratures.  There are several parameter combinations or ranges that
might benefit from specialized quadrature.  For example, the
Holtsmark distribution ($\alpha = 1.5$, $\beta = 0$) occurs in
statistical investigations of gravity~\cite{chand_1943,
  chavanis_2009}.  The methods of this paper can be applied
to compute this distribution, and others, very efficiently.

\subsection{The asymmetric case $\beta \neq 0$}
\label{sec_asymmetric} 

\begin{table}[!t]
  \begin{center}
    \caption{Asymmetric ($\beta \neq 0$) stable density evaluation for
      $\alpha \in [0.5,0.9]$ and $\alpha \in [1.1,2.0]$.}
    \begin{subtable}{.45\linewidth}
      \begin{center}
      \caption{$\alpha \in [.5, .9]$ }
      \label{tab_asym1}
      \begin{tabular}{|c|c|c|c|}  \hline
        & $n_{GGQ}$  & $n_{\infty}$ & $\max$ err \\ \hline
        $f$ & $94$ & $90$ & $5 \se{-14}$  \\ \hline
        $\partial_x f$ & $110$ & $90$ & $1 \se{-13}$ \\ \hline
        $\partial_\alpha f$ & $113$ & $90$ & $9 \se{-14}$ \\ \hline
        $\partial_\beta f$ & $109$ & $90$ & $5 \se{-14}$ \\ \hline
        $F$ & $181$ & $90$ & $1 \se{-8}$ \\ \hline
      \end{tabular}
      \end{center}
    \end{subtable} \qquad
    \begin{subtable}{.45\linewidth}
      \begin{center}
      \caption{$\alpha \in [1.1,2.0]$}
      \label{tab_asym2}
      \begin{tabular}{|c|c|c|c|}
        \hline
        & $n_{GGQ}$  & $n_{\infty}$ & $\max$ err \\ \hline
        $f$ & $86$ & $80$ & $2 \se{-14}$ \\ \hline
        $\partial_x f$ & $96$ & $80$ & $2 \se{-14}$\\ \hline
        $\partial_\alpha f$ & $98$ & $80$ & $9 \se{-14}$ \\ \hline
        $\partial_\beta f$ & $93$ & $80$ &  $4 \se{-14}$ \\ \hline
        $F$ & $88$ & $80$ & $1 \se{-14}$ \\ \hline
      \end{tabular}
      \end{center}
    \end{subtable}
  \end{center}
\end{table}

In the asymmetric case, $\beta\neq 0$, we first change variables and
evaluate the densities at locations relative to: $x-\zeta$.  This
ensures that the densities are continuous in all parameters.  As in
the symmetric case, we restrict our attention to densities with
$\alpha \geq 0.5$.  Furthermore, due to difficulties in the integral
and series formulations near $\alpha = 1$, we partition the $\alpha$
space into two regions: $[0.5,0.9]$ and $[1.1,2.0]$.  For values of
$\beta \neq 0$, $|\zeta| \to \infty$ as $\alpha \to 1$.  This is the
main mode of failure for both integral
representations~\eqref{eq_inverse_Fourier} and~\eqref{eq_statphas}
near~$\alpha = 1$.  As a consequence, the integrand
in~\eqref{eq_inverse_Fourier} becomes highly oscillatory for even
small values of $x-\zeta$, and~\eqref{eq_statphas} becomes spiked, as seen
in Figure~\ref{fig_spiked}.  Quadrature techniques developed to deal
with highly oscillatory integrands may be applicable in this regime,
and will be investigated in future work.

For calculating asymmetric densities, 
the parameter space is partitioned in the following manner:
for all $0 \leq x - \zeta \leq B^\infty_{n_\infty}$, 
the densities are calculated via a generalized Gaussian
quadrature scheme for the integral~\eqref{eq_inverse_Fourier}.
For $x - \zeta > B^\infty_{n_\infty}$,  
the series expansion~\eqref{eq_series_at_infinity} is used.
As mentioned previously, we have not obtained a 
convenient series representation of~$\partial_\alpha f$ and~$\partial_\beta f$. 
Similar to the computation of~$\partial_\alpha f$ in the symmetric case, 
we use finite differences to approximate values of~$\partial_\alpha f$
and~$\partial_\beta f$ whenever $x > B_{n_\infty}^\infty$.
Depending on the finite difference scheme used, 
this may lead to reduced accuracy compared to the quadrature 
method for the computation of~$f$ and~$\partial_x f$.
Similar accuracy reports to those for the symmetric densities 
are contained in  Tables~\ref{tab_asym1} and~\ref{tab_asym2}.
Notably, the quadrature rule for $F$
in the regime $\alpha \in [.5, .9]$ is less accurate
and has more nodes/weights than for the other functions.
This is due to the fact that the integrand in~\eqref{eq_cdf} for $F$
is singular at $t=0$ in the asymmetric case. 
As a consequence, designing highly-accurate quadrature
rules for~\eqref{eq_cdf} without using quadruple 
precision calculations is not possible.
This issue will be investigated in future work.
The corresponding quadrature rules are available for download
at~\thewebsite.

\subsection{Efficiency of the method}

To test the efficiency of our method, we compare our implementation of
the density function evaluation to two different 
implementations based on adaptive quadrature.  
All codes are written in \textsc{Matlab}.  The first
implementation simply applies \textsc{Matlab}'s \texttt{integral}
function to
the oscillatory integral~\eqref{eq_inverse_Fourier}.  Note that this
function can be called in a vectorized manner by adjusting the
\texttt{ArrayValued} argument.  
Without this adjustment, the computations below are about an order of
magnitude slower. (I.e. $t_{AQ1}$ and $t_{AQ2}$ are roughly 10 times
as large.)
The second implementation mimics the approach
that was previously taken to compute the stationary 
phase integral~\eqref{eq_statphas}.  
Namely, it first locates the peak of the
integrand using \textsc{Matlab}'s intrinsic \texttt{fzero} function,
and subsequently applies \texttt{integral}  on the
two subintervals created by splitting the original interval of
integration at the peak of the integrand.
 
\begin{table}[t]
  \begin{center}
    \caption{Timings for density evaluations}
    \label{tab_timings}
      \begin{tabular}{|c|c|c|c|c|c|}
        \hline
       	 $t_{GQ}$ & $t_{GQ}$ ($\beta = 0$)  & $t_{AQ1}$ & $t_{AQ2}$ \\ \hline
         $0.003$ sec & $0.001$ sec & $0.3$ sec & $2$ sec \\ \hline
      \end{tabular}
  \end{center}
\end{table}

The validation test proceeds as follows. 
First,~$\alpha$ and~$\beta$
are chosen randomly in the permissible parameter ranges.  Then,
10,000 uniformly random $x$ are generated such that
$0 \leq x-\zeta \leq 20$.  Thereafter, we record the wall-clock time
each method takes to calculate the stable density at all 10,000
points.  For our tests, we require the absolute accuracy of the
adaptive schemes to be $10^{-10}$.  The results are reported in 
Table~\ref{tab_timings}.
  The columns of the table are:
\begin{quote}
$t_{GQ}$: \tabto{.75in} the time taken by our scheme to compute the
density at all points,\\
$t_{AQ1}$: \tabto{.75in} time taken by the first adaptive scheme
outlined above, and \\
 $t_{AQ2}$: \tabto{.75in} the time taken by the second adaptive scheme
outlined above.
\end{quote}
  We also report a timing for the symmetric case ($\beta = 0$) 
for our scheme, since it uses a quadrature separate from the one in
the asymmetric case.
The test was performed on a MacBook Pro with a 2.4~GHz Intel Core 
i7 and 8 GB 1333 MHz DDR3 RAM.  As one can see, our
scheme outperforms the adaptive ones by at least two orders of
magnitude.

\section{Conclusions}
\label{sec_conclusions}

In this work, we have presented efficient quadrature schemes and
series expansions for numerically evaluating the densities, and
derivatives thereof, associated with what are known as stable
distributions.  The quadratures are of generalized Gaussian type, and
were constructed using a non-linear optimization procedure. The series
expansions were obtained straightforwardly from integral
representations, but seem to have not been previously presented in the
computational statistics literature.  The methods of this paper are
quite efficient, and easily vectorizable.  This is in contrast to
existing schemes for evaluating these integrals, which were
predominately based on adaptive integration -- which cannot take full 
advantage of vectorization schemes due to varying  depths of recursion.


Furthermore, while the quadratures that we constructed are (nearly)
optimal with respect to the number of nodes and weights required, they
do not obtain full double precision accuracy ($\sim 10^{-16}$). We
often only achieve absolute accuracies of $12$ or $13$ digits. While
some of the precision loss is due to merely roundoff error in summing
the terms in the quadrature, some of the loss of accuracy is due to
solving the ill-conditioned linearization of the quadrature problem.
The accuracy lost due to this aspect of the procedure could be
recovered if the quadrature generation codes were re-written using
quadruple precision arithmetic instead of double precision. In most
cases the accuracy we obtained is sufficient for general use, but we
are investigating a higher precision procedure for constructing the
quadrature rules.

The schemes presented in this paper still fail to thoroughly address
the evaluation of the density function (and gradient and CDF)
 for values of
$\alpha \approx 1$ in the asymmetric case. One could, however, perform a large-scale
precomputation in extended precision in order to tabulate these
densities for various values of $x$ and $\beta$, store the results,
and later interpolate to other values. This approach was beyond the
scope of this work. 
This approach was used for maximum likelihood estimation
in~\cite{Nolan_2001}. Unless chosen very carefully, 
a rather large number of interpolation nodes are necessary to 
achieve high accuracy, and each function ($f, \nabla f, F$) 
has to be tabulated separately.
We are actively investigating approaches to fill
in this gap in the numerical evaluation of  the
density (and gradient and CDF).

A software package written in \textsc{Matlab} for computing stable
densities, their gradients, and distribution functions
using the algorithms of this paper is available
at~\thewebsite, and will be continually updated as we improve the
efficiency and accuracy of existing evaluations, and include
additional capabilities.

\section*{Acknowledgments}
The authors would like to thank Sinan G\"unt\"urk
and Margaret Wright for several useful
conversations.

\begin{appendix}

\section{Gradients of series expansions}
\label{app_grad}

Here we provide formulae for the derivatives of the series expansions
presented in Section~\ref{sec_derivatives}.
From~\eqref{eq_series_at_zero},
\begin{equation}
\label{eq_series_xder_at_zero}
\partial_x f(x;\alpha, \beta) = \frac{1}{\alpha \pi} \sum_{k=0}^\infty \frac{ \Gamma( \frac{k+2}{\alpha} )}{ \Gamma( k ) } 
			(1+\zeta^2)^{ - \frac{k+2}{ 2\alpha}} 
			\sin \left( \left( \pi/2 + (\arctan \zeta)/\alpha \right)
			(k+2) \right) (x - \zeta)^{k}.
\end{equation}
Using an error bound analogous to the ones in Section~\ref{sec_asymp},
we have that
\begin{equation}
|x-\zeta| \leq C_n^0(\alpha) := \left[ \eps \alpha \pi 
		(1+\zeta^2)^{\frac{n+2}{2\alpha}} 
		\frac{\Gamma(n)} {\Gamma( \frac{n+2}{\alpha})}
		\right]^{1/n}.
\end{equation}

By differentiating \eqref{eq_series_at_infinity}, we attain
\begin{equation}
\label{eq_series_xder_at_infinity}
\begin{aligned}
  \partial_x f(x,\alpha, \beta) = \frac{\alpha}{\pi} \sum_{k=1}^\infty
  (-1)^{k} \frac{ (\alpha k + 1) \Gamma(\alpha k)}{\Gamma(k)}
  (1+\zeta^2)^{k/2} \sin((\pi \alpha - \arctan \zeta) k)
  (x-\zeta)^{-\alpha k-2},
\end{aligned}
\end{equation}
whose radius of convergence to precision $\eps$ we estimate by
\begin{equation}
  |x-\zeta| \geq C_{n-1}^\infty(\alpha) :=  \left[ \frac{\alpha}{\pi \eps} 
    (1+\zeta^2)^{\frac{n}{2} }
    \frac{ (\alpha n + 1)\Gamma(\alpha n)}{\Gamma( n ) } 
  \right]^{1/(\alpha n-2)}.
\end{equation}

For the parameter ranges where there is no convenient formulation of
the derivatives, we can use a finite difference approximation of the
form
\begin{equation}
  \partial_x f(x) = \frac{ -f(x+2h) + 8 f(x+h) 
- 8 f(x-h) + f(x-2h) }{12h} + O(h^4).
\end{equation}

\end{appendix}

\bibliographystyle{abbrv}
\bibliography{../preprint}

\begin{thebibliography}{10}

\bibitem{bremer_2010b}
J.~Bremer, Z.~Gimbutas, and V.~Rokhlin.
\newblock A nonlinear optimization procedure for generalized {G}aussian
  quadratures.
\newblock {\em SIAM J. Sci. Comput.}, 32:1761--1788, 2010.

\bibitem{chand_1943}
S.~Chandrasekhar.
\newblock Stochastic problems in physics and astronomy.
\newblock {\em Rev. Mod. Phys.}, 15(1):1--89, Jan 1943.

\bibitem{chavanis_2009}
P.~Chavanis.
\newblock Statistics of the gravitational force in various dimensions of space:
  from gaussian to l{\'e}vy laws.
\newblock {\em The European Physical Journal B}, 70(3):413--433, 2009.

\bibitem{dahlquist_2003}
G.~Dahlquist and A.~Bj\"orck.
\newblock {\em Numerical {M}ethods}.
\newblock Dover, New York, NY, 2003.

\bibitem{iserles_2006}
A.~Iserles, S.~N{\o}rsett, and S.~Olver.
\newblock Highly oscillatory quadrature: The story so far.
\newblock In {\em Numerical mathematics and advanced applications}, pages
  97--118. Springer, 2006.

\bibitem{kolbig_1984}
K.~S. K\"olbig and B.~Schorr.
\newblock A program package for the {L}andau distribution.
\newblock {\em Comput. Phys. Commun.}, 31:97--111, 1984.

\bibitem{ma_1996}
J.~Ma, V.~Rokhlin, and S.~Wandzura.
\newblock {Generalized Gaussian Quadrature Rules for Systems of Arbitrary
  Functions}.
\newblock {\em SIAM J. Numer. Anal.}, 33:971--996, 1996.

\bibitem{matsui_2004}
M.~Matsui and A.~Takemura.
\newblock Some improvements in numerical evaluation of symmetric stable density
  and its derivatives.
\newblock {\em Multivar. Anal.}, 35:149--172, 2006.

\bibitem{mittnik_1993}
S.~Mittnik and S.~T. Rachev.
\newblock Modeling asset returns with alternative stable distributions.
\newblock {\em Econometric Reviews}, 12(3):261--330, 1993.

\bibitem{nikias_1995}
C.~L. Nikias and M.~Shao.
\newblock {\em Signal {P}rocessing with {A}lpha-{S}table {Distributions} and
  {A}pplications}.
\newblock Wiley, New York, NY, 1995.

\bibitem{nolan_1997}
J.~P. Nolan.
\newblock Numerical calculation of stable densities and distribution functions.
\newblock {\em {Commun. Statist.-Stochastic Models}}, 13(4):759--774, 1997.

\bibitem{Nolan_2001}
J.~P. Nolan.
\newblock {\em Maximum likelihood estimation and diagnostics for stable
  distributions.}, chapter L{\'e}vy Processes: Theory and Applications.
\newblock Birkh{\"a}user Boston, 2001.

\bibitem{nolan_2003}
J.~P. Nolan.
\newblock Modeling financial data with stable distributions.
\newblock In S.~T. Rachev, editor, {\em {Handbook of Heavy Tailed Distributions
  in Finance}}, pages 105--130. Elsevier/North Holland, New York, NY, 2003.

\bibitem{nolan_2015}
J.~P. Nolan.
\newblock {\em Stable Distributions - Models for Heavy Tailed Data}.
\newblock Birkhauser, Boston, 2015.
\newblock In progress, Chapter 1 online at
  academic2.american.edu/$\sim$jpnolan.

\bibitem{olver_2008}
S.~Olver.
\newblock {\em {Numerical Approximation of Highly Oscillatory Integrals}}.
\newblock PhD thesis, University of Cambridge, 2008.

\bibitem{olver_2010}
S.~Olver.
\newblock Fast, numerically stable computation of oscillatory integrals with
  stationary points.
\newblock {\em Bit. Numer. Math.}, 50:149--171, 2010.

\bibitem{press_2007}
W.~H. Press, S.~A. Teukolsky, and W.~T. Vetterling.
\newblock {\em {Numerical Recipes}}.
\newblock Cambridge University Press, New York, NY, 3rd edition, 2007.

\bibitem{rasmussen_2006}
C.~E. Rasmussen and C.~Williams.
\newblock {\em {Gaussian Processes for Machine Learning}}.
\newblock MIT Press, Cambridge, MA, 2006.

\bibitem{shep_1991}
N.~G. Shephard.
\newblock {From characteristic function to distribution function: A simple
  framework for the theory}.
\newblock {\em Econometric Theory}, 7:519--529, 1991.

\bibitem{teimouri2008stable}
M.~Teimouri and H.~Amindavar.
\newblock A novel approach to calculate stable densities.
\newblock In {\em Proceedings of the World Congress on Engineering}, volume~1,
  2008.

\bibitem{yarvin_1998}
N.~Yarvin and V.~Rokhlin.
\newblock Generalized {G}aussian quadratures and singular value decompositions
  of integral operators.
\newblock {\em SIAM J. Sci. Comput.}, 20:699--718, 1998.

\bibitem{zolotarev_1986}
V.~M. Zolotarev.
\newblock {\em One-dimensional stable distributions}.
\newblock American Mathematical Society, Providence, RI, 1986.

\end{thebibliography}

\end{document}